\newtheorem{theorem}{Theorem}[section]
\newtheorem{lemma}[theorem]{Lemma}
\newtheorem{proposition}[theorem]{Proposition}
\newtheorem{corollary}[theorem]{Corollary}
\theoremstyle{definition}
\newtheorem{definition}[theorem]{Definition}
\newtheorem{example}[theorem]{Example}
\theoremstyle{remark}
\numberwithin{equation}{section}
\begin{document}
	
	\setcounter{page}{1}
	
	\title[Sum of $g-$frames in Hilbert $C^{\ast}-$modules ]{Sum of $g-$frames in Hilbert $C^{\ast}-$modules}

	\author[A. Lfounoune, H. Massit, A. Karara, M. Rossafi]{Abdellatif Lfounoune$^{1*}$, Hafida Massit$^{1}$, Abdelilah Karara$^{1}$ and Mohamed Rossafi$^{2}$}
	
	\address{$^{1}$Laboratory Partial Differential Equations, Spectral Algebra and Geometry, University of Ibn Tofail, Kenitra, Morocco}
	\email{\textcolor[rgb]{0.00,0.00,0.84}{abdellatif.lfounoune@uit.ac.ma; massithafida@yahoo.fr; abdelilah.karara.sm@gmail.com}}
	
	\address{$^{2}$Laboratory Partial Differential Equations, Spectral Algebra and Geometry, Higher School of Education and Training, University of Ibn Tofail, Kenitra, Morocco}
	\email{\textcolor[rgb]{0.00,0.00,0.84}{mohamed.rossafi@uit.ac.ma}}
	
	\subjclass[2020]{42C15; 46C05; 47B90.}
	
	\keywords{Frame, $g-$frame, $g-$Bessel sequence, Hilbert $ C^{\ast}- $modules.}
	
	\date{%10/03/2020; %Accepted: zzzzzz.
		\newline \indent $^{*}$Corresponding author}

	\begin{abstract} In this article, we study g-frames in Hilbert $C^*$-modules and investigate conditions under which the sum of two g-frames (or a g-frame and a g-Bessel sequence) remains a g-frame. We also address the stability of g-frames under certain perturbations and provide illustrative examples in the context of $C^*$-algebras. Our results unify and extend many of the existing theorems on g-frames, focusing on the invertibility of associated operators as a key condition for guaranteeing that sums of g-frames preserve the g-frame property.
	\end{abstract}
	\maketitle
	\section{Introduction }
	
Frames for classical Hilbert spaces were introduced by Duffin and Schaeffer \cite{Duf} in 1952 to research certain difficult nonharmonic Fourier series problems, following the fundamental paper \cite{DGM} by Daubechies, Grossman and Meyer, frame theory started to become popular, especially in the more specific context of Gabor frames and wavelet frames. 
Wenchang Sun \cite{S} introduced the generalized frame, or $g-$frame, for a Hilbert space. Recently, D. Li and J. Leng \cite{Li} introduced Operator representations of $g-$frames in Hilbert spaces. Controlled $g-$frames in Hilbert C*-modules have been investigated by N. K. Sahu \cite{Sah}. The notion of Approximate Oblique Dual $g-$frames for Closed Subspaces of Hilbert Spaces can be found in \cite{Chi}. Inspired by aspects of frames and $g-$frames, We give new results on $g-$frames for Hilbert $ C^{\ast} -$modules. For more detailed information on frame theory, readers are recommended to consult: \cite{Assila, Ghiati, Karara, Massit, FR1, RFDCA, Kabbaj,  Chouchene, NhariRossTou, r1, r3, r5, r6}.

Hilbert $C^{\ast}$-modules are generalizations of Hilbert spaces by allowing the inner product to take values in a $C^{\ast}$-algebra rather than in the field of complex
numbers.

Let's now review the definition of a Hilbert $C^{\ast}$-module, the basic properties and some facts concerning operators on Hilbert $C^{\ast}$-module.

	\begin{definition} \cite{Kal}
		Let $\mathcal{A}$ be a unital $ C^{\ast}- $ algebra and $ \mathcal{H} $ be a left $ \mathcal{A}-$ module, such that the linear structures of $ \mathcal{A} $ and $ \mathcal{H} $ are compatible. $ \mathcal{H} $ is a pre-Hilbert $ \mathcal{A} $ module if $ \mathcal{H} $ is equipped with an $ \mathcal{A}- $valued inner product $ \langle \cdot, \cdot\rangle_{\mathcal{A}}  : \mathcal{H} \times \mathcal{H}\rightarrow \mathcal{A}$ such that is sesquilinear, positive definite and respects the module action. In the other words,
		\begin{itemize}
			\item [1]- $\langle x, x\rangle_{\mathcal{A}}  \geq 0$,    $ \forall x\in \mathcal{H}$ and $\langle x, x\rangle_{\mathcal{A}}  = 0$ if and only if $ x=0 $.
			\item [2]- $\langle ax+y, z\rangle_{\mathcal{A}}  =a\langle x,z\rangle_{\mathcal{A}} +\langle y,z\rangle_{\mathcal{A}}  $ for all $ a\in \mathcal{A} $ and $ x,y,z \in \mathcal{H} $
		\item[3]- $ \langle x,y \rangle_{\mathcal{A}}  =  \langle y,x \rangle_{\mathcal{A}} ^{\ast}$ for all $ x,y \in\mathcal{H} $.
		\end{itemize}
	For $ x\in\mathcal{H} $, we define $\Vert x\Vert = \Vert \langle x,x \rangle_{\mathcal{A}}  \Vert^{\frac{1}{2}}$. If $ \mathcal{H} $ is complete with $ \Vert .\Vert $, it is called a Hilbert $ \mathcal{A}- $module or a Hilbert $ C^{\ast} -$module over $ \mathcal{A} $. For every $a$ in $ C^{\ast}- $algebra $ \mathcal{A} $, we have  $ \vert a \vert = (a^{\ast}a)^{\frac{1}{2}} $ and the $ \mathcal{A}- $valued norm on $ \mathcal{H} $ is defined by $ \vert x\vert= \langle x,x\rangle_{\mathcal{A}} ^{\frac{1}{2}} $ for $ x\in\mathcal{H} $.
	\end{definition}
	\begin{lemma} \cite{Pas}
	Let $\mathcal{H}$ be a Hilbert $\mathcal{A}$-module. If $\mathcal{T}\in End_{\mathcal{A}}^{\ast}(\mathcal{H})$, then $$\langle \mathcal{T}x,\mathcal{T}x\rangle_{\mathcal{A}}\leq\|\mathcal{T}\|^{2}\langle x,x\rangle_{\mathcal{A}}, \forall x\in\mathcal{H}.$$
\end{lemma}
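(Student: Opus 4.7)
The plan is to reduce the inequality to the standard C*-algebra fact that a positive element of a unital C*-algebra is bounded above by its norm times the unit, applied inside the C*-algebra $End_{\mathcal{A}}^{\ast}(\mathcal{H})$.

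First, I would verify that $\mathcal{T}^{\ast}\mathcal{T}$ is a positive element of $End_{\mathcal{A}}^{\ast}(\mathcal{H})$. It is clearly self-adjoint, and for every $x\in\mathcal{H}$ one has
$$\langle \mathcal{T}^{\ast}\mathcal{T}x,x\rangle_{\mathcal{A}}=\langle \mathcal{T}x,\mathcal{T}x\rangle_{\mathcal{A}}\geq 0,$$
so positivity in the sense appropriate to Hilbert modules holds. The C*-identity in $End_{\mathcal{A}}^{\ast}(\mathcal{H})$ gives $\|\mathcal{T}^{\ast}\mathcal{T}\|=\|\mathcal{T}\|^{2}$.

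Second, I would invoke the functional-calculus observation that in any unital C*-algebra $B$ a positive element $b$ satisfies $b\leq\|b\|\cdot 1_{B}$, since $\mathrm{spec}(b)\subset[0,\|b\|]$. Applying this with $B=End_{\mathcal{A}}^{\ast}(\mathcal{H})$ and $b=\mathcal{T}^{\ast}\mathcal{T}$ yields the operator inequality
$$\mathcal{T}^{\ast}\mathcal{T}\leq \|\mathcal{T}\|^{2}I_{\mathcal{H}}.$$
By the very definition of operator positivity in the module setting, this means $\langle(\|\mathcal{T}\|^{2}I_{\mathcal{H}}-\mathcal{T}^{\ast}\mathcal{T})x,x\rangle_{\mathcal{A}}\geq 0$ for every $x\in\mathcal{H}$; rearranging and using the adjoint relation gives the claimed bound $\langle \mathcal{T}x,\mathcal{T}x\rangle_{\mathcal{A}}\leq\|\mathcal{T}\|^{2}\langle x,x\rangle_{\mathcal{A}}$.

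I do not expect any serious obstacle: the argument is essentially a translation of a standard scalar-versus-positive-element comparison into the $\mathcal{A}$-valued language. The only delicate point is conceptual, namely keeping track of the two equivalent notions of positivity---as a positive element of the C*-algebra $End_{\mathcal{A}}^{\ast}(\mathcal{H})$ and as non-negativity of the associated $\mathcal{A}$-valued quadratic form---and recognizing that both viewpoints coincide on adjointable operators, so that the spectral inequality transports directly to the desired inner-product inequality.
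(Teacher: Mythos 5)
The paper does not prove this lemma; it is quoted from Paschke's paper \cite{Pas} without argument, so there is no internal proof to compare against. Your argument is correct and is in fact the standard one: $\mathcal{T}^{\ast}\mathcal{T}$ is positive in the $C^{\ast}$-algebra $\operatorname{End}_{\mathcal{A}}^{\ast}(\mathcal{H})$, spectral calculus gives $\mathcal{T}^{\ast}\mathcal{T}\leq\|\mathcal{T}\|^{2}I$, and this transfers to the $\mathcal{A}$-valued quadratic form. The one place where I would be slightly more careful than your phrase ``by the very definition of operator positivity'' is the passage from positivity of $P=\|\mathcal{T}\|^{2}I-\mathcal{T}^{\ast}\mathcal{T}$ as a $C^{\ast}$-algebra element to $\langle Px,x\rangle_{\mathcal{A}}\geq 0$ for all $x$: the definition of positivity in a $C^{\ast}$-algebra is self-adjointness plus nonnegative spectrum, not form-positivity, so you should make the bridge explicit by writing $P=R^{2}$ with $R=P^{1/2}\in\operatorname{End}_{\mathcal{A}}^{\ast}(\mathcal{H})$ self-adjoint and observing $\langle Px,x\rangle_{\mathcal{A}}=\langle Rx,Rx\rangle_{\mathcal{A}}\geq 0$. (This is the easy direction of the equivalence you allude to; the converse, which you do not need, is the genuinely nontrivial half of Paschke's result.) With that one sentence added, the proof is complete.
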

\begin{lemma}\cite{Ara}\label{L22}
Let $\mathcal{H}$ and $\mathcal{K}$ two Hilbert $\mathcal{A}$-modules and $\mathcal{T} \in \operatorname{End}^*(\mathcal{H}, \mathcal{K})$. Then the following statements are equivalent
\begin{enumerate}
\item $\mathcal{T}$ is surjective.
\item  $\mathcal{T}^*$ is bounded below with respect to norm, i.e., there is $m>0$ such that $m\|x\| \leq\left\|\mathcal{T}^* x\right\|$ for all $x \in \mathcal{K}$.
\item $\mathcal{T}^*$ is bounded below with respect to the inner product, i.e., there is $m^{\prime}>0$ such that $m^{\prime}\langle x, x\rangle \leq\left\langle \mathcal{T}^* x, \mathcal{T}^* f\right\rangle$ for all $x \in\mathcal{K}$.
\end{enumerate}
\end{lemma}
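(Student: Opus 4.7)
\textbf{Proof plan for Lemma \ref{L22}.}

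My plan is to prove the chain (3)$\Rightarrow$(2)$\Rightarrow$(1)$\Rightarrow$(3), since the implication (3)$\Rightarrow$(2) is immediate from the $C^*$-identity $\|x\|^{2}=\|\langle x,x\rangle\|$, the implication (2)$\Rightarrow$(1) rests on a closed-range argument that is already standard for adjointable operators, and the return loop (1)$\Rightarrow$(3) lets me avoid having to deduce the inner-product lower bound directly from the norm lower bound, which is the genuinely delicate point in the $C^*$-module setting.

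For (3)$\Rightarrow$(2), I would apply the $C^*$-algebra norm to the inequality $m'\langle x,x\rangle\leq\langle \mathcal{T}^{*}x,\mathcal{T}^{*}x\rangle$, using monotonicity of the norm on positive elements to get $m'\|x\|^{2}\leq\|\mathcal{T}^{*}x\|^{2}$, so $m=\sqrt{m'}$ works. For (2)$\Rightarrow$(1), I would argue that a bounded-below adjointable operator has closed range, so $\mathrm{Ran}(\mathcal{T}^{*})$ is closed, and invoke the Hilbert $C^*$-module closed-range theorem to conclude $\mathrm{Ran}(\mathcal{T})$ is also closed; then using that for adjointable operators with closed range one has the orthogonal decomposition $\mathcal{K}=\mathrm{Ran}(\mathcal{T})\oplus\ker(\mathcal{T}^{*})$, together with $\ker(\mathcal{T}^{*})=\{0\}$ (since $\mathcal{T}^{*}$ is bounded below), I obtain $\mathrm{Ran}(\mathcal{T})=\mathcal{K}$.

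The key implication is (1)$\Rightarrow$(3). Here I would first pass from surjectivity to an estimate on $\mathcal{T}\mathcal{T}^{*}$. By the Banach open mapping theorem applied to the adjointable surjection $\mathcal{T}$, there is a constant $C>0$ such that every $y\in\mathcal{K}$ admits a preimage $x\in\mathcal{H}$ with $\|x\|\leq C\|y\|$; chaining
$$\|y\|^{2}=\|\langle\mathcal{T}x,y\rangle\|=\|\langle x,\mathcal{T}^{*}y\rangle\|\leq\|x\|\,\|\mathcal{T}^{*}y\|\leq C\|y\|\,\|\mathcal{T}^{*}y\|$$
gives the norm lower bound $\|y\|\leq C\|\mathcal{T}^{*}y\|$. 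To upgrade this to an inner-product inequality, I would observe that $\mathcal{T}\mathcal{T}^{*}\in\mathrm{End}^{*}(\mathcal{K})$ is positive and, because $\mathcal{T}$ is surjective, invertible in this algebra (standard consequence of the open mapping theorem plus adjointability). Functional calculus then yields a constant $c>0$ with $\mathcal{T}\mathcal{T}^{*}\geq c\cdot I_{\mathcal{K}}$, and polarizing gives
$$\langle \mathcal{T}^{*}x,\mathcal{T}^{*}x\rangle=\langle \mathcal{T}\mathcal{T}^{*}x,x\rangle\geq c\langle x,x\rangle,$$
which is (3) with $m'=c$.

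The main obstacle is the step (1)$\Rightarrow$(3): in Hilbert $C^{*}$-modules one cannot simply square the norm inequality of (2) to obtain (3), because the order structure is strictly finer than the norm. Routing through invertibility of $\mathcal{T}\mathcal{T}^{*}$ in $\mathrm{End}^{*}(\mathcal{K})$ is what makes the argument go through, and I would make sure to justify that invertibility carefully (the inverse is again adjointable because $\mathrm{End}^{*}(\mathcal{K})$ is a $C^*$-algebra and $\mathcal{T}\mathcal{T}^{*}$ is positive with trivial kernel plus closed range). The remaining implications are short book-keeping once this core fact is in place.
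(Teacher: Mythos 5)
The paper does not actually prove Lemma \ref{L22}; it is quoted verbatim from the reference [Ara] (Aramba\v{s}i\'c), so there is no in-paper argument to compare against. Judged on its own, your chain $(3)\Rightarrow(2)\Rightarrow(1)\Rightarrow(3)$ is correct and is essentially the standard proof one finds in the literature on adjointable operators. The two steps that carry real content are handled properly: for $(2)\Rightarrow(1)$ you correctly route through the closed-range theorem for Hilbert $C^{*}$-modules and the decomposition $\mathcal{K}=\mathrm{Ran}(\mathcal{T})\oplus\ker(\mathcal{T}^{*})$, which is exactly what is needed because orthogonal complementation is not automatic in this setting; and for $(1)\Rightarrow(3)$ you correctly identify that one cannot square the norm inequality of $(2)$ to get the operator inequality of $(3)$, and instead pass through invertibility of the positive operator $\mathcal{T}\mathcal{T}^{*}$ in the $C^{*}$-algebra $\mathrm{End}^{*}(\mathcal{K})$, so that functional calculus gives $\mathcal{T}\mathcal{T}^{*}\geq c\,I$ with $c=\|(\mathcal{T}\mathcal{T}^{*})^{-1}\|^{-1}$. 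One small tightening you could make explicit: surjectivity of $\mathcal{T}$ does not by itself hand you surjectivity of $\mathcal{T}\mathcal{T}^{*}$; you get it from the norm bound $\|\mathcal{T}\mathcal{T}^{*}y\|\,\|y\|\geq\|\langle\mathcal{T}^{*}y,\mathcal{T}^{*}y\rangle\|\geq m^{2}\|y\|^{2}$ (so $\mathcal{T}\mathcal{T}^{*}$ is bounded below, hence injective with closed range) combined with self-adjointness and the same orthogonal decomposition used in $(2)\Rightarrow(1)$. You gesture at this in your closing paragraph, so the plan is sound; it just deserves to be written out in the final version.
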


Throughout the paper, let $\Theta$ be a finite or countably index set and we consider $ \mathcal{H} $, $ \mathcal{K} $ be two  Hilbert $ \mathcal{A} -$modules. A  map $  \mathcal{T}:  \mathcal{H}   \rightarrow   \mathcal{K} $ is said to be adjointable if there exists a map $  \mathcal{T}^{\ast}:  \mathcal{K}   \rightarrow   \mathcal{H} $ such that $ \langle \mathcal{T} x, y\rangle_{\mathcal{A}} = \langle  x,\mathcal{T}^{\ast} y\rangle_{\mathcal{A}}$ for all $ x\in \mathcal{H} $ and $ y\in \mathcal{K} $. Let $\mathcal{H}_\zeta$ be a family of Hilbert $\mathcal{A}$-modules indexed by
$\zeta\in \Theta $, the collection of all adjointable $\mathcal{A}$-linear maps from $\mathcal{H}$ to $\mathcal{H}_{\zeta}$ is denoted by $\operatorname{End}_\mathcal{A}^*\left(\mathcal{H}, \mathcal{H}_\zeta\right)$ and $\operatorname{End}_\mathcal{A}^*(\mathcal{H}, \mathcal{H})$ is abbreviated to $\operatorname{End}_\mathcal{A}^*(\mathcal{H})$.

\begin{definition}\cite{Kal} A collection $ \{x_{\zeta}\}_{\zeta\in \Theta}\subset \mathcal{H} $ is called a frame for $ \mathcal{H} $. If there exist two positive constants $ A $ and $ B $ such that for all $ x\in \mathcal{H} $, 
	\begin{equation}\label{eq1}
		A\langle x,x\rangle_{\mathcal{A}}\leq \sum_{\zeta\in \Theta}  \langle x,x_{\zeta}\rangle_{\mathcal{A}} \langle x_{\zeta},x\rangle_{\mathcal{A}} \leq B \langle f,f\rangle_{\mathcal{A}},
	\end{equation} 
\end{definition}

\begin{definition}\cite{Kh} A sequence $ \{\Psi_{\zeta} \in \operatorname{End}_{\mathcal{A}}^{\ast}(\mathcal{H},\mathcal{H}_{\zeta}): \zeta\in \Theta\} $ is said a $g-$frame for $ \mathcal{H} $ with respect to $ \{\mathcal{H}_{\zeta}: \zeta\in \Theta\} $. If there exist two positive constants $ A $ and $ B $ such that for $ x\in\mathcal{H} $,
	\begin{equation} \label{eq2}
			A\langle x,x\rangle_{\mathcal{A}}\leq \sum_{\zeta\in \Theta}  \langle \Psi_{\zeta}x,\Psi_{\zeta}x\rangle_{\mathcal{A}}  \leq B \langle x,x\rangle_{\mathcal{A}}.
	\end{equation}
The numbers $ A $ and $ B $ are called lower and upper  bounds of the $g-$frame, respectively. If $ A = B = \nu $, the $g-$frame is $ \nu- $tight. If $ A= B=1 $, it is called a $g$-Parseval frame. If olny the right-hand inequality (\ref{eq2}) is satisfied, the family  $ \{\Psi_{\zeta} \in \operatorname{End}_{\mathcal{A}}^{\ast}(\mathcal{H},\mathcal{H}_{\zeta}): \zeta\in \Theta\} $ is called a $g-$Bessel sequence for $\mathcal{H}$ with $g-$Bessel bound $B$.
\end{definition}

Let $ \{\Psi_{\zeta} \in \operatorname{End}_{\mathcal{A}}^{\ast}(\mathcal{H},\mathcal{H}_{\zeta}): \zeta\in \Theta\} $ be a $ g- $Bessel sequence for $ \mathcal{H} $.
We define the analysis  $ \mathcal{T}^{\ast} $, the synthesis operator $\mathcal{T}$  and the $ g- $frame operator $ S $ as follows: $ \mathcal{T}^{\ast}: \mathcal{H}\rightarrow \bigoplus_{\zeta\in \Theta}\mathcal{H}_{\zeta} $, $ \mathcal{T}^{\ast}x=\{ \Psi_{\zeta}x\}_{\zeta\in \Theta} $, $ \mathcal{T}: \bigoplus_{\zeta\in \Theta}\mathcal{H}_{\zeta} \rightarrow \mathcal{H} $, $ \mathcal{T} \left(\{y_{\zeta}\}_{\zeta\in \Theta}\right)= \sum\limits_{\zeta\in \Theta}\Psi_{\zeta}^{\ast}y_{\zeta}$, and $ S:\mathcal{H} \rightarrow \mathcal{H} $ is given by $ Sx= \sum\limits_{\zeta\in \Theta}\Psi_{\zeta}^{\ast}\Psi_{\zeta}x $ for all $ x\in \mathcal{H} $.

\section{Main result}
 
  \begin{proposition}\label{p1} Let $ \{\Psi_{\zeta} \in \operatorname{End}_{\mathcal{A}}^{\ast}(\mathcal{H},\mathcal{H}_{\zeta}): \zeta\in \Theta\} $ be a $ g- $Bessel sequence for $ \mathcal{H} $ with the $ g- $frame operator $ S $. Then, $ \{\Psi_{\zeta}\}_{\zeta\in \Theta} $ is a $ g -$frame for $ \mathcal{H} $ with respect to $ \{\mathcal{H}_{\zeta}\}_{\zeta\in \Theta} $ if and only if $ S\geq \alpha I $ for some $ \alpha >0 $.
  	
  	\end{proposition}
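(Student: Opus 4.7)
The plan is to use the one-line identity
\[
\langle Sx,x\rangle_{\mathcal{A}} \;=\; \Big\langle \sum_{\zeta\in\Theta}\Psi_{\zeta}^{\ast}\Psi_{\zeta}x,\,x\Big\rangle_{\mathcal{A}} \;=\; \sum_{\zeta\in\Theta}\langle \Psi_{\zeta}x,\Psi_{\zeta}x\rangle_{\mathcal{A}},
\]
which is valid for every $x\in\mathcal{H}$ because $\{\Psi_{\zeta}\}$ is a $g$-Bessel sequence (so the series defining $S$ converges in the strong sense and the partial sums can be interchanged with the inner product by adjointness of each $\Psi_{\zeta}$). This identity translates between the middle term of the $g$-frame inequality \eqref{eq2} and the quadratic form of $S$.

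First I would handle the forward direction: if $\{\Psi_{\zeta}\}_{\zeta\in\Theta}$ is a $g$-frame with lower bound $A>0$, then for every $x\in\mathcal{H}$ the above identity and \eqref{eq2} give
\[
\langle Sx,x\rangle_{\mathcal{A}} \;=\; \sum_{\zeta\in\Theta}\langle \Psi_{\zeta}x,\Psi_{\zeta}x\rangle_{\mathcal{A}} \;\geq\; A\,\langle x,x\rangle_{\mathcal{A}},
\]
which is precisely the statement $S\geq AI$ in the $C^{\ast}$-module ordering; taking $\alpha=A$ completes this half.

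For the converse, assume $S\geq \alpha I$ for some $\alpha>0$. The same identity yields
\[
\sum_{\zeta\in\Theta}\langle \Psi_{\zeta}x,\Psi_{\zeta}x\rangle_{\mathcal{A}} \;=\; \langle Sx,x\rangle_{\mathcal{A}} \;\geq\; \alpha\,\langle x,x\rangle_{\mathcal{A}},
\]
which supplies the lower $g$-frame bound, while the upper bound $B$ is already provided by the standing $g$-Bessel hypothesis. Thus $\{\Psi_{\zeta}\}_{\zeta\in\Theta}$ satisfies \eqref{eq2} with bounds $\alpha$ and $B$ and is a $g$-frame.

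There is no real obstacle here; the only subtlety is making sure the identity $\langle Sx,x\rangle_{\mathcal{A}}=\sum_{\zeta}\langle\Psi_{\zeta}x,\Psi_{\zeta}x\rangle_{\mathcal{A}}$ is justified in the $C^{\ast}$-module setting, which follows from the Bessel property (guaranteeing unconditional convergence of $Sx=\sum_{\zeta}\Psi_{\zeta}^{\ast}\Psi_{\zeta}x$) together with continuity and $\mathcal{A}$-linearity of $\langle\cdot,x\rangle_{\mathcal{A}}$ and the adjoint relation $\langle\Psi_{\zeta}^{\ast}\Psi_{\zeta}x,x\rangle_{\mathcal{A}}=\langle\Psi_{\zeta}x,\Psi_{\zeta}x\rangle_{\mathcal{A}}$.
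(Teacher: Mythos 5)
Your proposal is correct and follows essentially the same route as the paper: both directions reduce to the identity $\langle Sx,x\rangle_{\mathcal{A}}=\sum_{\zeta\in\Theta}\langle\Psi_{\zeta}x,\Psi_{\zeta}x\rangle_{\mathcal{A}}$, which converts the lower $g$-frame inequality into the operator inequality $S\geq\alpha I$ and back. Your added remark justifying the interchange of the sum with the inner product via the Bessel property is a welcome refinement that the paper leaves implicit.
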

  \begin{proof}Suppose $ \{\Psi_{\zeta}\}_{\zeta\in \Theta} $ is a $ g- $frame for  $ \mathcal{H} $, we have for some $ \alpha >0 $,
  	\begin{equation*}
  		  \alpha\langle x,x\rangle_{\mathcal{A}}\leq \sum_{\zeta\in \Theta}  \langle \Psi_{\zeta}x,\Psi_{\zeta}x\rangle_{\mathcal{A}},\: \forall x \in \mathcal{H}.  
  	\end{equation*} 
For any $x\in\mathcal{H}$, we have
	\begin{equation*}
\sum_{\zeta\in \Theta}  \langle \Psi_{\zeta}x,\Psi_{\zeta}x\rangle_{\mathcal{A}}=\sum_{\zeta\in \Theta}  \langle \Psi_{\zeta}^{\ast}\Psi_{\zeta}x,x\rangle_{\mathcal{A}}=\langle Sx,x\rangle_{\mathcal{A}}.  
\end{equation*} 

Therefore, $ \alpha I\leq S $.

  	Suppose $ \{\Psi_{\zeta}\}_{\zeta\in \Theta} $ is a $ g- $Bessel sequence for $ \mathcal{H} $ and there exists some $ \alpha>0 $ such that $ S\geq \alpha I $. Thus, we have
  	
  	$$ \langle  \alpha I x,x\rangle_{\mathcal{A}}\leq  \langle  S x,x\rangle_{\mathcal{A}}$$  for all $ x\in  \mathcal{H},$ this implies $$ \langle  \alpha I x,x\rangle_{\mathcal{A}}\leq \langle  \sum_{\zeta\in \Theta}\Psi_{\zeta}^{\ast}\Psi_{\zeta} x,x\rangle_{\mathcal{A}}.$$ 
  	
  	Then, the lower condition holds:
  	\begin{equation*}
  		\alpha  \langle x,x\rangle_{\mathcal{A}} \leq  \sum_{\zeta\in \Theta}  \langle \Psi_{\zeta}x,\Psi_{\zeta}x\rangle_{\mathcal{A}},\; \forall x\in\mathcal{H}.
  	\end{equation*}
  	\end{proof}
  	\begin{corollary}\label{c32}
  	The $g-$Bessel sequence $ \{\Psi_{\zeta} \in \operatorname{End}_{\mathcal{A}}^{\ast}(\mathcal{H},\mathcal{H}_{\zeta}): \zeta\in \Theta\} $ with the $g-$frame operator $S$ is a $g-$frame for $\mathcal{H}$ if and only if $S>0$.
  	\end{corollary}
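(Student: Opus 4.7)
The plan is to derive this corollary directly from Proposition \ref{p1} by establishing the equivalence ``$S \geq \alpha I$ for some $\alpha > 0$'' if and only if ``$S > 0$'', where $S > 0$ is understood in the standard $C^{\ast}$-algebraic sense: $S$ is positive and invertible in the $C^{\ast}$-algebra $\operatorname{End}_{\mathcal{A}}^{\ast}(\mathcal{H})$. Once this equivalence is in hand, the corollary follows at once from Proposition \ref{p1}, since that proposition already characterises the $g$-frame property by the operator inequality $S \geq \alpha I$.

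The forward direction ($S \geq \alpha I \Rightarrow S > 0$) is essentially immediate: the inequality $S \geq \alpha I$ with $\alpha > 0$ implies that $S$ is positive, self-adjoint, and bounded below in the inner-product sense, so Lemma \ref{L22} applied to $S$ (which is self-adjoint, hence its own adjoint) yields that $S$ is surjective, and being bounded below it is also injective with closed range, hence invertible.

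For the reverse direction, suppose $S > 0$. Because $\operatorname{End}_{\mathcal{A}}^{\ast}(\mathcal{H})$ is a $C^{\ast}$-algebra and $S$ is positive, the continuous functional calculus provides a self-adjoint positive square root $S^{1/2} \in \operatorname{End}_{\mathcal{A}}^{\ast}(\mathcal{H})$. Since $S$ is invertible, so is $S^{1/2}$, and in particular $S^{1/2}$ is surjective. Applying Lemma \ref{L22}(3) to $\mathcal{T} = S^{1/2}$, there exists $m > 0$ such that $m\langle x,x\rangle_{\mathcal{A}} \leq \langle S^{1/2}x, S^{1/2}x\rangle_{\mathcal{A}}$ for all $x \in \mathcal{H}$. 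Self-adjointness of $S^{1/2}$ then gives $\langle S^{1/2}x, S^{1/2}x\rangle_{\mathcal{A}} = \langle Sx, x\rangle_{\mathcal{A}}$, so $S \geq mI$ and Proposition \ref{p1} finishes the argument.

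The main subtlety lies in the correct interpretation of ``$S > 0$'': in infinite dimensions the pointwise condition $\langle Sx,x\rangle_{\mathcal{A}} > 0$ for $x \neq 0$ is strictly weaker than bounded-below-ness and fails to yield a uniform lower frame bound. The statement therefore has to be read in the standard $C^{\ast}$-algebraic sense (strictly positive $=$ positive and invertible), and this is precisely what lets the square-root/surjectivity step via Lemma \ref{L22} extract the uniform constant $\alpha > 0$ required by Proposition \ref{p1}.
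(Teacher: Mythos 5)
Your proposal is correct and follows the same route the paper intends: the paper states this corollary without proof as an immediate consequence of Proposition \ref{p1}, and what you supply is precisely the missing standard fact that, for a positive element $S$ of the $C^{\ast}$-algebra $\operatorname{End}_{\mathcal{A}}^{\ast}(\mathcal{H})$, strict positivity (positivity plus invertibility) is equivalent to $S\geq \alpha I$ for some $\alpha>0$. Your remark that the pointwise reading of ``$S>0$'' would be too weak is a worthwhile clarification that the paper leaves implicit.
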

  	\begin{lemma}\label{L33}
  	Let $ \{\Psi_{\zeta} \in \operatorname{End}_{\mathcal{A}}^{\ast}(\mathcal{H},\mathcal{H}_{\zeta}): \zeta\in \Theta\} $ be a $g-$Bessel sequence for $\mathcal{H}$, with the
synthesis operator $\mathcal{T}$ and the $g-$frame operator $ S $. Then  $\mathcal{T}$ is surjective if only if $ S>0 $.
  	\end{lemma}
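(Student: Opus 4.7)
The plan is to reduce the claim to Lemma \ref{L22} (Arambašić's characterization of surjectivity) combined with Corollary \ref{c32}. The bridge between them is the standard factorization $S=\mathcal{T}\mathcal{T}^{\ast}$, where $\mathcal{T}^{\ast}$ is the analysis operator $x\mapsto\{\Psi_{\zeta}x\}_{\zeta\in\Theta}$. So the very first step is to verify this factorization and, more importantly, the pointwise identity
\[
\langle \mathcal{T}^{\ast}x,\mathcal{T}^{\ast}x\rangle_{\mathcal{A}}
= \sum_{\zeta\in\Theta}\langle \Psi_{\zeta}x,\Psi_{\zeta}x\rangle_{\mathcal{A}}
= \langle Sx,x\rangle_{\mathcal{A}},\qquad x\in\mathcal{H},
\]
using the definition of the inner product on $\bigoplus_{\zeta}\mathcal{H}_{\zeta}$ and the relation $\Psi_{\zeta}^{\ast}\Psi_{\zeta}=$ the $\zeta$-th summand of $S$.

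For the forward direction, I would assume $\mathcal{T}$ is surjective. By Lemma \ref{L22} applied with the roles $\mathcal{T}\leftrightarrow \mathcal{T}$ (the synthesis operator) and its adjoint $\mathcal{T}^{\ast}$ (the analysis operator), there exists $m'>0$ such that $m'\langle x,x\rangle_{\mathcal{A}}\leq\langle \mathcal{T}^{\ast}x,\mathcal{T}^{\ast}x\rangle_{\mathcal{A}}$ for every $x\in\mathcal{H}$. Substituting the identity above gives $m'\langle x,x\rangle_{\mathcal{A}}\leq\langle Sx,x\rangle_{\mathcal{A}}$, i.e.\ $S\geq m'I$. In particular $S>0$; equivalently, by Proposition \ref{p1} and Corollary \ref{c32}, $\{\Psi_{\zeta}\}$ is a $g$-frame, which is exactly the statement $S>0$.

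For the converse, suppose $S>0$. Using Corollary \ref{c32} we know that $\{\Psi_{\zeta}\}$ is then a $g$-frame, so by Proposition \ref{p1} there is $\alpha>0$ with $\alpha I\leq S$. Reading this through the same identity $\langle Sx,x\rangle_{\mathcal{A}}=\langle \mathcal{T}^{\ast}x,\mathcal{T}^{\ast}x\rangle_{\mathcal{A}}$ yields $\alpha\langle x,x\rangle_{\mathcal{A}}\leq\langle \mathcal{T}^{\ast}x,\mathcal{T}^{\ast}x\rangle_{\mathcal{A}}$, so condition (3) of Lemma \ref{L22} holds. That lemma then gives surjectivity of $(\mathcal{T}^{\ast})^{\ast}=\mathcal{T}$.

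The main obstacle, and really the only non-routine point, is making sure the hypotheses of Lemma \ref{L22} apply cleanly in the $C^{\ast}$-module setting: the synthesis operator $\mathcal{T}$ acts from $\bigoplus_{\zeta}\mathcal{H}_{\zeta}$, which is itself a Hilbert $\mathcal{A}$-module, and one must know that $\mathcal{T}\in\operatorname{End}_{\mathcal{A}}^{\ast}(\bigoplus_{\zeta}\mathcal{H}_{\zeta},\mathcal{H})$ with $\mathcal{T}^{\ast}$ equal to the analysis operator. This adjointability follows from the $g$-Bessel hypothesis (it gives boundedness of $\mathcal{T}^{\ast}$ and hence of $\mathcal{T}$), and once it is granted the proof is just the chain of equivalences above.
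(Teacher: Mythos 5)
Your proof is correct, and the forward direction coincides with the paper's: both apply Lemma \ref{L22} to get $\langle \mathcal{T}^{\ast}x,\mathcal{T}^{\ast}x\rangle_{\mathcal{A}}\geq \alpha\langle x,x\rangle_{\mathcal{A}}$, identify this with $\langle Sx,x\rangle_{\mathcal{A}}$, and conclude $S\geq\alpha I$, hence $S>0$ via Proposition \ref{p1} and Corollary \ref{c32}. Where you genuinely diverge is the converse. The paper, after using Corollary \ref{c32} to upgrade $S>0$ to the $g$-frame property, outsources surjectivity of $\mathcal{T}$ to Theorem 3.2 of the external reference \cite{Xia}. You instead close the loop internally: from $S\geq\alpha I$ you read off that $\mathcal{T}^{\ast}$ is bounded below with respect to the inner product, which is exactly condition (3) of Lemma \ref{L22}, and the (3)$\Rightarrow$(1) implication of that same lemma delivers surjectivity of $(\mathcal{T}^{\ast})^{\ast}=\mathcal{T}$. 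This is arguably the cleaner route --- Lemma \ref{L22} is stated as an equivalence, so using it in both directions makes the argument self-contained and symmetric, whereas the paper's version leans on an outside theorem whose proof presumably runs through the same bounded-below criterion anyway. Your explicit attention to the adjointability of $\mathcal{T}$ on $\bigoplus_{\zeta}\mathcal{H}_{\zeta}$ (needed before Lemma \ref{L22} can be invoked at all) is a point the paper passes over in silence, and it is worth the sentence you give it. One small caveat inherited from the paper rather than introduced by you: the condition ``$S>0$'' is being read throughout as ``$S\geq\alpha I$ for some $\alpha>0$'' (that is how Corollary \ref{c32} follows from Proposition \ref{p1}), and your converse depends on that reading; under a weaker interpretation of positivity the chain would not start.
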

\begin{proof}
 Let $ \{\Psi_{\zeta} \in \operatorname{End}_{\mathcal{A}}^{\ast}(\mathcal{H},\mathcal{H}_{\zeta}): \zeta\in \Theta\} $ be a $g-$Bessel sequence for $\mathcal{H}$. Assume that $\mathcal{T}$ is surjective, by Lemma \ref{L22}  there is $\alpha>0$ such that for any $x\in \mathcal{H}$,
 $$ \left\langle \mathcal{T}^* x, \mathcal{T}^* x\right\rangle_{\mathcal{A}}\geq\alpha\langle x, x\rangle_{\mathcal{A}}.$$
Therefore 
$$\left\langle \mathcal{T} \mathcal{T}^* f, f\right\rangle_{\mathcal{A}}\geq\alpha\langle f, f\rangle_{\mathcal{A}}.$$
Hence $S>\alpha I$. So by Proposition \ref{p1} and Corollary \ref{c32} we conclude that $S>0$.

Conversely suppose that the $g-$frame operator  $ S>0 $, by Corollary \ref{c32} $ \{\Psi_{\zeta} \in \operatorname{End}_{\mathcal{A}}^{\ast}(\mathcal{H},\mathcal{H}_{\zeta}): \zeta\in \Theta\} $ is a $g-$frame for $\mathcal{H}$. According to Theorem 3.2 in \cite{Xia}, we conclude that $\mathcal{T}$ is surjective.
\end{proof}
\begin{theorem}Let $ \{\Psi_{\zeta} \in \operatorname{End}_{\mathcal{A}}^{\ast}(\mathcal{H},\mathcal{H}_{\zeta}): \zeta\in \Theta\} $ be a $ g- $frame for $ \mathcal{H} $ with bounds $ A,\; B $, with the $ g- $frame operator $ S $. If $\Lambda \in \operatorname{End}_{\mathcal{A}}^{\ast}(\mathcal{H}) $ such that $( I+\Lambda)^{\ast}S(I+\Lambda)\geq S$. Then $ \{\Psi_{\zeta}+\Psi_{\zeta}\Lambda\}_{\zeta\in \Theta} $ is a $ g- $frame for $ \mathcal{H} $. 
	
\end{theorem}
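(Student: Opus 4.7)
The plan is to observe that the new family factors as $\Psi_\zeta + \Psi_\zeta\Lambda = \Psi_\zeta(I+\Lambda)$, so that the associated quadratic form can be expressed directly in terms of the original frame operator $S$. Indeed, for every $x\in\mathcal{H}$,
\begin{equation*}
\sum_{\zeta\in\Theta}\langle (\Psi_\zeta+\Psi_\zeta\Lambda)x,(\Psi_\zeta+\Psi_\zeta\Lambda)x\rangle_{\mathcal{A}}
=\sum_{\zeta\in\Theta}\langle \Psi_\zeta (I+\Lambda)x,\Psi_\zeta (I+\Lambda)x\rangle_{\mathcal{A}}
=\langle (I+\Lambda)^{\ast}S(I+\Lambda)x,x\rangle_{\mathcal{A}},
\end{equation*}
using the identity $\sum_{\zeta}\langle\Psi_\zeta y,\Psi_\zeta y\rangle_{\mathcal{A}}=\langle Sy,y\rangle_{\mathcal{A}}$ already derived in Proposition \ref{p1}. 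So the whole argument reduces to reading off the two frame bounds from the operator inequality satisfied by $(I+\Lambda)^{\ast}S(I+\Lambda)$.

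For the lower bound, I would combine the hypothesis $(I+\Lambda)^{\ast}S(I+\Lambda)\geq S$ with the lower frame inequality $S\geq A\,I$ for $\{\Psi_\zeta\}_{\zeta\in\Theta}$ (which follows from Proposition \ref{p1}), yielding
\begin{equation*}
\sum_{\zeta\in\Theta}\langle(\Psi_\zeta+\Psi_\zeta\Lambda)x,(\Psi_\zeta+\Psi_\zeta\Lambda)x\rangle_{\mathcal{A}}
=\langle (I+\Lambda)^{\ast}S(I+\Lambda)x,x\rangle_{\mathcal{A}}\geq\langle Sx,x\rangle_{\mathcal{A}}\geq A\langle x,x\rangle_{\mathcal{A}}.
\end{equation*}

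For the upper bound, I would use that $I+\Lambda\in\operatorname{End}_{\mathcal{A}}^{\ast}(\mathcal{H})$ is adjointable and hence bounded, so by Lemma 2.2 applied to $\Psi_\zeta(I+\Lambda)$ (or directly to $(I+\Lambda)$), one gets
\begin{equation*}
\langle (I+\Lambda)^{\ast}S(I+\Lambda)x,x\rangle_{\mathcal{A}}
=\langle S(I+\Lambda)x,(I+\Lambda)x\rangle_{\mathcal{A}}
\leq B\,\langle (I+\Lambda)x,(I+\Lambda)x\rangle_{\mathcal{A}}
\leq B\,\|I+\Lambda\|^{2}\langle x,x\rangle_{\mathcal{A}}.
\end{equation*}

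Neither step is genuinely delicate: the factorization $\Psi_\zeta(I+\Lambda)$ does all the work, and the given operator inequality supplies exactly the lower bound with constant $A$ while boundedness of $\Lambda$ supplies the upper bound with constant $B\|I+\Lambda\|^{2}$. The only point that deserves care is making sure the Bessel-type estimate is carried out through Lemma 2.2 in the $C^{\ast}$-module sense (rather than in an inner-product-space sense), so that the intermediate inequality $\langle S y,y\rangle_{\mathcal{A}}\leq B\langle y,y\rangle_{\mathcal{A}}$ for $y=(I+\Lambda)x$ is justified from the $g$-frame upper bound of $\{\Psi_\zeta\}$ rather than from a scalar inequality.
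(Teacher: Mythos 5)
Your proof is correct and follows essentially the same route as the paper: both identify the $g$-frame operator of $\{\Psi_\zeta+\Psi_\zeta\Lambda\}_{\zeta\in\Theta}=\{\Psi_\zeta(I+\Lambda)\}_{\zeta\in\Theta}$ as $(I+\Lambda)^{\ast}S(I+\Lambda)$ and obtain the lower bound from the hypothesis $(I+\Lambda)^{\ast}S(I+\Lambda)\geq S$ together with $S\geq \alpha I$ (Proposition \ref{p1}). The only cosmetic difference is the Bessel bound, where your factorization yields the sharper constant $B\|I+\Lambda\|^{2}$ while the paper expands the cross terms and settles for $2B(1+\|\Lambda\|^{2})$.
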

\begin{proof}Assume that $ \{\Psi_{\zeta}\}_{\zeta\in \Theta} $ be a $ g- $frame and  $\Lambda \in \operatorname{End}_{\mathcal{A}}^{\ast}(\mathcal{H}) $ is bounded. 
	We have for any $ x\in\mathcal{H} $, 
	\begin{equation*}
			A\langle  x,x\rangle_{\mathcal{A}} \leq  \sum_{\zeta\in \Theta}  \langle \Psi_{\zeta}x,\Psi_{\zeta}x\rangle_{\mathcal{A}}\leq B\langle  x,x\rangle_{\mathcal{A}}
	\end{equation*}
	and 
	\begin{align*}
		   \langle( \Psi_{\zeta}+\Lambda \Psi_{\zeta})x,(\Psi_{\zeta}+\Lambda \Psi_{\zeta})x\rangle_{\mathcal{A}}
&=  \langle  \Psi_{\zeta}x,\Psi_{\zeta} x\rangle_{\mathcal{A}}+   \langle  \Psi_{\zeta}\Lambda x,\Psi_{\zeta}\Lambda x\rangle_{\mathcal{A}} +   \langle  \Psi_{\zeta}x,\Psi_{\zeta}\Lambda x\rangle_{\mathcal{A}} \\&+   \langle  \Psi_{\zeta}\Lambda x,\Psi_{\zeta} x\rangle_{\mathcal{A}}\\
&\leq 2 \left(\langle  \Psi_{\zeta}x,\Psi_{\zeta} x\rangle_{\mathcal{A}}+   \langle  \Psi_{\zeta}\Lambda x,\Psi_{\zeta}\Lambda x\rangle_{\mathcal{A}} \right)
	\end{align*}
Therefore	
	\begin{align*}
		  \sum_{\zeta\in \Theta}  \langle( \Psi_{\zeta}+\Lambda \Psi_{\zeta})x,(\Psi_{\zeta}+\Lambda \Psi_{\zeta})x\rangle_{\mathcal{A}}
&\leq 2\left( \sum_{\zeta\in \Theta}  \langle  \Psi_{\zeta}x,\Psi_{\zeta} x\rangle_{\mathcal{A}}+ \sum_{\zeta\in \Theta}  \langle  \Psi_{\zeta}\Lambda x,\Psi_{\zeta}\Lambda x\rangle_{\mathcal{A}}\right)\\
&\leq 2B(1+\Vert \Lambda \Vert^{2})\langle  x,x\rangle_{\mathcal{A}}.
	\end{align*}
Thus,  $ \{\Psi_{\zeta}+\Psi_{\zeta}\Lambda\}_{\zeta\in \Theta} $ is a $ g- $Bessel sequence. Since $ \{\Psi_{\zeta}\}_{\zeta\in \Theta} $ is a $ g- $frame for  $ \mathcal{H} $, by Proposition \ref{p1}, for some $ \alpha >0$, we have $ S\geq \alpha I $. Also, for every $ x\in \mathcal{H} $, we get 
\begin{align*}
	 \sum_{\zeta\in \Theta} (  \Psi_{\zeta}+\Lambda \Psi_{\zeta})^{\ast}(\Psi_{\zeta}+\Lambda \Psi_{\zeta})&=  \sum_{\zeta\in \Theta}  \left( \Psi_{\zeta}^{\ast}\Psi_{\zeta}+ \Psi_{\zeta}^{\ast}\Psi_{\zeta}\Lambda \right. +\left.  \Lambda^{\ast}\Psi_{\zeta}^{\ast}\Psi_{\zeta}+ \Lambda^{\ast}\Psi_{\zeta}^{\ast}\Psi_{\zeta}\Lambda \right) \\
	 & =  S+S\Lambda+\Lambda^{\ast}S+\Lambda^{\ast}S\Lambda\\
	 &=( I+\Lambda)^{\ast}S(I+\Lambda)
\end{align*} 
Then, 
$$ \mathcal{P}= S+S\Lambda+\Lambda^{\ast}S+\Lambda^{\ast}S\Lambda=( I+\Lambda)^{\ast}S(I+\Lambda)\geq S $$
 Hence the operator $\mathcal{P}$  is the $ g- $frame operator of $ \{\Psi_{\zeta}+\Psi_{\zeta}\Lambda\}_{\zeta\in \Theta} $. So, $ \mathcal{P}\geq \alpha I $ for some $ \alpha>0 $. Using proposition \ref{p1} we obtain $ \{\Psi_{\zeta}+\Psi_{\zeta}\Lambda\}_{\zeta\in \Theta} $ is a $ g- $frame for $ \mathcal{H} $ with respect to $ \{\mathcal{H}_{\zeta}\}_{\zeta\in \Theta} $.
\end{proof}

\begin{theorem} \label{t3}
	Let $\Psi= \{\Psi_{\zeta}\}_{\zeta\in \Theta} $ and $ \Delta= \{\Delta_{\zeta}\}_{\zeta\in \Theta} $  are two $ g- $Bessel sequences for $ \mathcal{H} $ with the synthesis operators $ \mathcal{T}_{\Psi} $, $ \mathcal{T}_{\Delta} $ and the $ g- $frame operators $ S_{\Psi} $, $ S_{\Delta} $, respectively. If $ M $, $ N \in \operatorname{End}_{\mathcal{A}}^{\ast}(\mathcal{H}) $, then the following conditions are equivalents: 
	\begin{itemize}
		\item [(1)]  $ \{\Psi_{\zeta}M+\Delta_{\zeta}N\}_{\zeta\in \Theta} $ is a $ g- $frame for $ \mathcal{H} $.
		\item[(2)] $ M^{\ast}\mathcal{T}_{\Psi}+N\mathcal{T}_{\Delta} $ is surjective.
		\item[(3)] $ S= M^{\ast}S_{\Psi}M+M^{\ast}\mathcal{T}_{\Psi}\mathcal{T}^{\ast}_{\Delta}N+N^{\ast}\mathcal{T}_{\Delta}\mathcal{T}^{\ast}_{\Psi}M+N^{\ast}S_{\Delta}N>0 $.
	\end{itemize}
Then, $ S $ is the $ g- $frame operator of $ \{\Psi_{\zeta}M+\Delta_{\zeta}N\}_{\zeta\in \Theta} $.
\end{theorem}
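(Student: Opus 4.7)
The strategy is to identify the synthesis operator and the $g$-frame operator of the candidate family $\Gamma_\zeta := \Psi_\zeta M + \Delta_\zeta N$, and then read off each of the three equivalences directly from Corollary~\ref{c32} and Lemma~\ref{L33}. The plan has two preparatory steps and then three short equivalence arguments.

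First I would verify that $\{\Gamma_\zeta\}_{\zeta\in\Theta}$ is itself a $g$-Bessel sequence: since $M,N\in\operatorname{End}_{\mathcal{A}}^{\ast}(\mathcal{H})$ are bounded and adjointable, the same $2B(1+\|M\|^2+\|N\|^2)$-type estimate used in the previous theorem (triangle inequality together with $\langle Mx,Mx\rangle_{\mathcal{A}}\le\|M\|^2\langle x,x\rangle_{\mathcal{A}}$ from the Paschke inequality) gives an upper bound. This allows us to speak of the synthesis operator $\mathcal{T}_\Gamma$ and the $g$-frame operator $S_\Gamma$ of $\{\Gamma_\zeta\}$, so that Corollary~\ref{c32} and Lemma~\ref{L33} become applicable.

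Next I would compute these two operators explicitly. Taking adjoints, $\Gamma_\zeta^{\ast}=M^{\ast}\Psi_\zeta^{\ast}+N^{\ast}\Delta_\zeta^{\ast}$, so summing over $\zeta$ yields
\begin{equation*}
\mathcal{T}_\Gamma=M^{\ast}\mathcal{T}_\Psi+N^{\ast}\mathcal{T}_\Delta,
\end{equation*}
which matches statement~(2) (reading $N\mathcal{T}_\Delta$ there as $N^{\ast}\mathcal{T}_\Delta$). Expanding $S_\Gamma=\mathcal{T}_\Gamma\mathcal{T}_\Gamma^{\ast}$ and using $\mathcal{T}_\Psi\mathcal{T}_\Psi^{\ast}=S_\Psi$, $\mathcal{T}_\Delta\mathcal{T}_\Delta^{\ast}=S_\Delta$ gives
\begin{equation*}
S_\Gamma=M^{\ast}S_\Psi M+M^{\ast}\mathcal{T}_\Psi\mathcal{T}_\Delta^{\ast}N+N^{\ast}\mathcal{T}_\Delta\mathcal{T}_\Psi^{\ast}M+N^{\ast}S_\Delta N,
\end{equation*}
which is exactly the operator $S$ of (3). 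This already proves the last sentence of the theorem (that $S$ is the $g$-frame operator of $\{\Psi_\zeta M+\Delta_\zeta N\}$).

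With these identifications the three equivalences are immediate. For $(1)\Leftrightarrow(3)$, since $\{\Gamma_\zeta\}$ is a $g$-Bessel sequence with $g$-frame operator $S_\Gamma=S$, Corollary~\ref{c32} says it is a $g$-frame if and only if $S_\Gamma>0$, i.e.\ $S>0$. For $(1)\Leftrightarrow(2)$, Lemma~\ref{L33} applied to the Bessel sequence $\{\Gamma_\zeta\}$ says it is a $g$-frame if and only if its synthesis operator $\mathcal{T}_\Gamma=M^{\ast}\mathcal{T}_\Psi+N^{\ast}\mathcal{T}_\Delta$ is surjective. The only real subtleties are the bookkeeping in the adjoint computation of $\mathcal{T}_\Gamma$ and making sure the cross terms in $S_\Gamma$ are grouped correctly as $\mathcal{T}_\Psi\mathcal{T}_\Delta^{\ast}$ and $\mathcal{T}_\Delta\mathcal{T}_\Psi^{\ast}$; everything else is a direct invocation of the already-proved Corollary~\ref{c32} and Lemma~\ref{L33}.
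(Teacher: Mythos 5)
Your proposal is correct and follows essentially the same route as the paper's own proof: establish the Bessel bound for $\{\Psi_{\zeta}M+\Delta_{\zeta}N\}_{\zeta\in\Theta}$, identify its synthesis operator as $M^{\ast}\mathcal{T}_{\Psi}+N^{\ast}\mathcal{T}_{\Delta}$ (the paper's displayed computation confirms your reading that the $N\mathcal{T}_{\Delta}$ in condition~(2) is a typo for $N^{\ast}\mathcal{T}_{\Delta}$), expand $\mathcal{T}\mathcal{T}^{\ast}$ to obtain $S$, and conclude via Lemma~\ref{L33} and Corollary~\ref{c32}.
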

\begin{proof}
	Let $ D_{1} $, $ D_{2} $ be $ g- $Bessel bounds of $ \Psi $ and $ \Delta $, respectively. Since $ M, $ $ N $ are bounded, for any $ x\in \mathcal{H} $, we have 
	\begin{align*}
		  \sum_{\zeta\in \Theta}  \langle( \Psi_{\zeta}M+ \Delta_{\zeta}N)x,( \Psi_{\zeta}M+ \Delta_{\zeta}N)x\rangle_{\mathcal{A}}&\leq  2\left(\sum_{\zeta\in \Theta}  \langle  \Delta_{\zeta}Nx,\Delta_{\zeta}N x\rangle_{\mathcal{A}}+ \sum_{\zeta\in \Theta}  \langle  \Psi_{\zeta}M x,\Psi_{\zeta}M x\rangle_{\mathcal{A}}\right)\\
		&\leq 2(D_{1} \Vert M \Vert^{2}+D_{2} \Vert N \Vert^{2})\langle x,x \rangle_{\mathcal{A}}.
	\end{align*} 
Then,  $ \{\Psi_{\zeta}M+\Delta_{\zeta}N\}_{\zeta\in \Theta} $ is a  $ g- $Bessel sequence for $ \mathcal{H} $. 

If $ \mathcal{T} $ is the synthesis operator of $ \{\Psi_{\zeta}M+\Delta_{\zeta}N\}_{\zeta\in \Theta} $, then we have 
\begin{equation*}
	\mathcal{T}^{\ast}x =  \{(\Psi_{\zeta}M+\Delta_{\zeta}N)x\}_{\zeta\in \Theta} =\mathcal{T}_{\Psi}^{\ast} M x+\mathcal{T}_{\Delta}^{\ast} N x,\;\forall x\in \mathcal{H}.
\end{equation*} 
i.e. $ \mathcal{T}= M^{\ast}\mathcal{T}_{\Psi} +N^{\ast}\mathcal{T}_{\Delta}   $.

 By Lemma \ref{L33}, the conditions $ (1) $ and $ (2) $ are equivalents. Moreover, we have 
 \begin{equation*}
 	S=\mathcal{T} \mathcal{T}^{\ast}= M^{\ast}S_{\Psi}M+M\mathcal{T}_{\Psi}\mathcal{T}^{\ast}_{\Delta}N+ N^{\ast}\mathcal{T}_{\Delta}\mathcal{T}^{\ast}_{\Psi}M+N^{\ast}S_{\Delta}N>0.
 \end{equation*}
Therefore, the conditions $ (2) $ and $ (3) $ are equivalent. 

\end{proof}
\begin{corollary} Let $ \Psi= \{\Psi_{\zeta}\} _{\zeta\in \Theta}$
	and $  \Delta= \{\Delta_{\zeta}\}_{\zeta\in \Theta} $ are two $ g- $Bessel sequences for $ \mathcal{H} $ with the synthesis operators $ \mathcal{T}_{\Psi} $, $ \mathcal{T}_{\Delta} $ and the $ g- $frame operators $ S_{\Psi} $, $ S_{\Delta} $, respectively. If $ \mathcal{T}_{\Psi}\mathcal{T}^{\ast}_{\Delta} $ is positive, then  $ \{\Psi_{\zeta}+\Delta_{\zeta}\}_{\zeta\in \Theta} $ is a $ g- $frame for $ \mathcal{H} $ with respect to $ \{	\mathcal{H}_{\zeta}\}_{\zeta\in \Theta} $.
\end{corollary}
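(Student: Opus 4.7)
The plan is to apply Theorem~\ref{t3} directly with the specific choice $M = N = I_{\mathcal{H}}$, so that $\{\Psi_\zeta M + \Delta_\zeta N\}_{\zeta\in\Theta}$ coincides with $\{\Psi_\zeta + \Delta_\zeta\}_{\zeta\in\Theta}$. Under this choice the theorem reduces the whole conclusion to verifying its condition~(3), namely that the candidate frame operator
$$ S \;=\; S_\Psi + \mathcal{T}_\Psi \mathcal{T}^{\ast}_\Delta + \mathcal{T}_\Delta \mathcal{T}^{\ast}_\Psi + S_\Delta $$
is strictly positive. Once this is established, Theorem~\ref{t3} automatically identifies $S$ as the $g$-frame operator of the sum, so no separate Bessel estimate needs to be redone.

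Next, I would exploit the hypothesis that $\mathcal{T}_\Psi \mathcal{T}^{\ast}_\Delta$ is positive: this in particular forces it to be self-adjoint, and hence
$$ \mathcal{T}_\Delta \mathcal{T}^{\ast}_\Psi \;=\; \bigl(\mathcal{T}_\Psi \mathcal{T}^{\ast}_\Delta\bigr)^{\ast} \;=\; \mathcal{T}_\Psi \mathcal{T}^{\ast}_\Delta, $$
which is again positive. The expression for $S$ therefore collapses to the manifestly positive sum $S = S_\Psi + S_\Delta + 2\,\mathcal{T}_\Psi \mathcal{T}^{\ast}_\Delta$. In the operator order this yields $S \geq S_\Psi$ and $S \geq S_\Delta$, since each of the three summands is positive and adjointable.

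The main obstacle is upgrading this non-strict positivity to the lower bound $S \geq \alpha I$ required by Proposition~\ref{p1} and Corollary~\ref{c32}, because for arbitrary Bessel pairs positivity alone is too weak. The argument must therefore lean on the fact that at least one of $S_\Psi,\,S_\Delta$ is bounded below by a positive multiple of the identity, i.e.\ that at least one of the two sequences is already a $g$-frame; the positivity of the cross term $\mathcal{T}_\Psi \mathcal{T}^{\ast}_\Delta$ is precisely what guarantees this lower bound is \emph{preserved} rather than cancelled in the sum. Assuming, say, $S_\Psi \geq \alpha I$ from Proposition~\ref{p1}, the domination $S \geq S_\Psi \geq \alpha I$ gives $S > 0$, and a final application of Theorem~\ref{t3} concludes that $\{\Psi_\zeta + \Delta_\zeta\}_{\zeta\in\Theta}$ is a $g$-frame for $\mathcal{H}$ with frame operator $S$.
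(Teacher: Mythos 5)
Your reduction is exactly the paper's: set $M=N=I$ in Theorem~\ref{t3}, identify the frame operator of the sum as $S=S_{\Psi}+\mathcal{T}_{\Psi}\mathcal{T}_{\Delta}^{\ast}+\mathcal{T}_{\Delta}\mathcal{T}_{\Psi}^{\ast}+S_{\Delta}$, and try to verify condition (3), i.e.\ $S>0$. Your intermediate observation is correct and is actually sharper than what the paper writes: positivity of $\mathcal{T}_{\Psi}\mathcal{T}_{\Delta}^{\ast}$ forces it to be self-adjoint, hence $\mathcal{T}_{\Delta}\mathcal{T}_{\Psi}^{\ast}=(\mathcal{T}_{\Psi}\mathcal{T}_{\Delta}^{\ast})^{\ast}=\mathcal{T}_{\Psi}\mathcal{T}_{\Delta}^{\ast}$, so $S=S_{\Psi}+S_{\Delta}+2\,\mathcal{T}_{\Psi}\mathcal{T}_{\Delta}^{\ast}\geq S_{\Psi}$ and $S\geq S_{\Delta}$ in the operator order.

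The gap is in your final step: you invoke ``$S_{\Psi}\geq\alpha I$ from Proposition~\ref{p1},'' but that proposition delivers such a bound only when $\Psi$ is a $g$-frame, whereas the corollary assumes only that $\Psi$ and $\Delta$ are $g$-Bessel sequences. You flag this yourself (``the argument must lean on the fact that at least one of $S_{\Psi}, S_{\Delta}$ is bounded below''), but that fact is not among the hypotheses, so what you have proved is a strengthened statement, not the one asked. To be fair, the paper's own proof is worse, not better: it simply writes $S=S_{\Psi}+\mathcal{T}_{\Psi}\mathcal{T}_{\Delta}^{\ast}+\mathcal{T}_{\Delta}\mathcal{T}_{\Psi}^{\ast}+S_{\Delta}>0$ as though a sum of positive operators were automatically bounded below, which it is not. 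Indeed, taking $\Psi_{\zeta}=\Delta_{\zeta}=0$ for all $\zeta$ satisfies every stated hypothesis (both families are $g$-Bessel and $\mathcal{T}_{\Psi}\mathcal{T}_{\Delta}^{\ast}=0$ is positive) while the sum is certainly not a $g$-frame for $\mathcal{H}\neq\{0\}$; so the corollary as printed is false, and an additional assumption of the kind you introduced (that $\Psi$ or $\Delta$ is a $g$-frame, as in Theorem~\ref{t11}) is genuinely required. In short: your approach mirrors the paper's, you correctly diagnosed the missing ingredient, but neither your argument nor the paper's establishes the statement as literally given.
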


\begin{proof}
	If $ M=N=I $ in Theorem \ref{t3}, then the $ g- $frame operator of the $ g- $Bessel sequence  $ \{\Psi_{\zeta}+\Delta_{\zeta}\}_{\zeta\in \Theta} $ is as follows:
	\begin{align*}
	    S&= \mathcal{T}_{\Psi}\mathcal{T}_{\Psi}^{\ast}+\mathcal{T}_{\Psi}\mathcal{T}^{\ast}_{\Delta}+\mathcal{T}_{\Delta}\mathcal{T}^{\ast}_{\Psi} \\
	   &= S_{\Psi}+\mathcal{T}_{\Psi}\mathcal{T}_{\Delta}^{\ast}+\mathcal{T}_{\Delta}\mathcal{T}_{\Psi}^{\ast}+S_{\Delta}>0.
	 \end{align*}  
\end{proof}
 Let $ \Psi= \{\Psi_{\zeta}\} _{\zeta\in \Theta}$
and $  \Delta= \{\Delta_{\zeta}\}_{\zeta\in \Theta} $ are two $ g- $Bessel sequences for $ \mathcal{H} $. 
In the following theorems, We give sufficient conditions for the sequences $ \{\theta_{\zeta}\}_{\zeta\in \Theta} $ and $ \{\delta_{\zeta}\}_{\zeta\in \Theta} $  which imply $ \{ \theta_{\zeta}\Psi_{\zeta}+\delta_{\zeta}\Delta_{\zeta}\}_{\zeta\in \Theta} $ is a $ g- $frame for $ \mathcal{H} $. 

\begin{theorem} \label{t7}Let $ \{\Psi_{\zeta} \in \operatorname{End}_{\mathcal{A}}^{\ast}(\mathcal{H},\mathcal{H}_{\zeta}): \zeta\in \Theta\} $ be a $ g- $frame for $ \mathcal{H} $ with bound $ D$ and $ D^{\prime}$, let $ \{\Delta_{\zeta}\}_{\zeta\in \Theta} $ be a $ g- $Bessel sequence for $ \mathcal{H} $ with Bessel bound $ D_{\Delta} $. Suppose that  $ \{\theta_{\zeta}\}_{\zeta\in \Theta} $ and $ \{\delta_{\zeta}\}_{\zeta\in \Theta} $
are two sequences from the algebra $\mathcal{A}$ such that $ 0< A<\vert \theta_{\zeta}\vert^{2} , \vert \delta_{\zeta}\vert^{2}<B< \infty$. If $ BD_{\Delta} < AD$, then $ \{ \theta_{\zeta}\Psi_{\zeta}+\delta_{\zeta}\Delta_{\zeta}\}_{\zeta\in \Theta} $ is a $ g- $frame for $ \mathcal{H} $ with respect to $\{ \mathcal{H}_{\zeta}\}_{\zeta\in \Theta} $. 
\end{theorem}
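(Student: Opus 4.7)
The plan is to verify the upper (Bessel) and lower $g$-frame bounds for $\{\theta_\zeta\Psi_\zeta+\delta_\zeta\Delta_\zeta\}_{\zeta\in\Theta}$ separately. For the Bessel bound I split the sum using the standard pointwise estimate $\langle u+v,u+v\rangle_\mathcal{A}\le 2\langle u,u\rangle_\mathcal{A}+2\langle v,v\rangle_\mathcal{A}$ (exactly as in the preceding theorem in the excerpt), and then bound each summand via $\langle\theta_\zeta\Psi_\zeta x,\theta_\zeta\Psi_\zeta x\rangle_\mathcal{A}\le B\langle\Psi_\zeta x,\Psi_\zeta x\rangle_\mathcal{A}$ (and similarly for $\delta_\zeta$), using the norm bound $\|\theta_\zeta\|^{2}=\||\theta_\zeta|^{2}\|<B$ together with Lemma 2.2 applied to the left action of $\theta_\zeta$ on $\mathcal{H}_\zeta$. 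Summing over $\zeta$ and invoking the Bessel bounds $D'$ of $\Psi$ and $D_\Delta$ of $\Delta$ yields the upper $g$-frame constant $2B(D'+D_\Delta)$.

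For the lower bound, the key input is the one-parameter inequality
\begin{equation*}
\langle u+v,u+v\rangle_\mathcal{A}\ge (1-\epsilon)\langle u,u\rangle_\mathcal{A}+(1-\epsilon^{-1})\langle v,v\rangle_\mathcal{A},\qquad \epsilon\in(0,1),
\end{equation*}
obtained by expanding $\langle\sqrt{\epsilon}\,u-\epsilon^{-1/2}v,\sqrt{\epsilon}\,u-\epsilon^{-1/2}v\rangle_\mathcal{A}\ge 0$. Applying this pointwise with $u=\theta_\zeta\Psi_\zeta x$, $v=\delta_\zeta\Delta_\zeta x$ and summing over $\zeta$, the lower hypothesis $|\theta_\zeta|^{2}>A$ combined with the $g$-frame lower bound $D$ of $\Psi$ gives $\sum_\zeta\langle\theta_\zeta\Psi_\zeta x,\theta_\zeta\Psi_\zeta x\rangle_\mathcal{A}\ge AD\langle x,x\rangle_\mathcal{A}$, while the upper hypothesis $|\delta_\zeta|^{2}<B$ together with the Bessel bound $D_\Delta$ gives $\sum_\zeta\langle\delta_\zeta\Delta_\zeta x,\delta_\zeta\Delta_\zeta x\rangle_\mathcal{A}\le BD_\Delta\langle x,x\rangle_\mathcal{A}$. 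This produces a lower frame coefficient of $(1-\epsilon)AD-(\epsilon^{-1}-1)BD_\Delta$; the optimal choice $\epsilon=\sqrt{BD_\Delta/(AD)}$ lies in $(0,1)$ precisely because $BD_\Delta<AD$, and for this $\epsilon$ the coefficient simplifies to $(\sqrt{AD}-\sqrt{BD_\Delta})^{2}>0$, delivering the required positive lower $g$-frame bound.

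The main obstacle is the pointwise step $A\langle\Psi_\zeta x,\Psi_\zeta x\rangle_\mathcal{A}\le\langle\theta_\zeta\Psi_\zeta x,\theta_\zeta\Psi_\zeta x\rangle_\mathcal{A}\le B\langle\Psi_\zeta x,\Psi_\zeta x\rangle_\mathcal{A}$ invoked in both directions. Expanding the middle quantity as $\theta_\zeta\langle\Psi_\zeta x,\Psi_\zeta x\rangle_\mathcal{A}\theta_\zeta^{*}$, one sees that the congruence $a\mapsto\theta_\zeta a\theta_\zeta^{*}$ need not preserve the order of positive elements of a non-commutative $C^{*}$-algebra from two-sided bounds on $\theta_\zeta^{*}\theta_\zeta$ alone, so this step implicitly requires $\theta_\zeta$ and $\delta_\zeta$ to be central in $\mathcal{A}$ (or $\mathcal{A}$ itself to be commutative); with that understood, the rest of the argument is a routine assembly of the two estimates above into the lower frame bound $(\sqrt{AD}-\sqrt{BD_\Delta})^{2}$.
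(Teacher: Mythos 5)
Your proof is correct (modulo the caveat you yourself flag) and its lower-bound argument takes a genuinely different route from the paper's. The upper-bound computation is identical: the same pointwise splitting and the constant $2B(D'+D_{\Delta})$. For the lower bound, however, the paper works at the level of scalar norms, applying the reverse triangle inequality in $\bigoplus_{\zeta\in\Theta}\mathcal{H}_{\zeta}$ to obtain
$$\Bigl\Vert\sum_{\zeta\in\Theta}\langle(\theta_{\zeta}\Psi_{\zeta}+\delta_{\zeta}\Delta_{\zeta})x,(\theta_{\zeta}\Psi_{\zeta}+\delta_{\zeta}\Delta_{\zeta})x\rangle_{\mathcal{A}}\Bigr\Vert^{\frac{1}{2}}\geq\bigl(\sqrt{AD}-\sqrt{BD_{\Delta}}\bigr)\Vert x\Vert,$$
which is only a norm lower bound; to recover the $\mathcal{A}$-valued inequality $\sum_{\zeta}\langle\cdot,\cdot\rangle_{\mathcal{A}}\geq\alpha\langle x,x\rangle_{\mathcal{A}}$ required by the definition one must still invoke Lemma \ref{L22} (boundedness below of the analysis operator in norm is equivalent to boundedness below in the inner product), a step the paper leaves implicit. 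Your pointwise inequality $\langle u+v,u+v\rangle_{\mathcal{A}}\geq(1-\epsilon)\langle u,u\rangle_{\mathcal{A}}+(1-\epsilon^{-1})\langle v,v\rangle_{\mathcal{A}}$ with the optimal $\epsilon=\sqrt{BD_{\Delta}/(AD)}$ yields the same constant $(\sqrt{AD}-\sqrt{BD_{\Delta}})^{2}$ but directly in the operator order of $\mathcal{A}$, so it is self-contained and arguably cleaner. Finally, the obstruction you identify --- that $\theta_{\zeta}\langle\Psi_{\zeta}x,\Psi_{\zeta}x\rangle_{\mathcal{A}}\theta_{\zeta}^{\ast}$ need not dominate $A\langle\Psi_{\zeta}x,\Psi_{\zeta}x\rangle_{\mathcal{A}}$ from $|\theta_{\zeta}|^{2}>A$ alone when $\mathcal{A}$ is noncommutative --- is equally present in the paper's proof, which uses exactly the same pointwise bounds $\sqrt{AD}$ and $\sqrt{BD_{\Delta}}$ without comment; so your argument is no weaker than the published one on that point, and you deserve credit for making the hidden hypothesis explicit.
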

 \begin{proof}
 	We have for any $ x\in\mathcal{H} $, 
 	\begin{align*}
 		 \sum_{\zeta\in \Theta}  \langle( \theta_{\zeta} \Psi_{\zeta}+ \delta_{\zeta} \Delta_{\zeta})x,(\theta_{\zeta} \Psi_{\zeta}+ \delta_{\zeta}\Delta_{\zeta})x\rangle_{\mathcal{A}}
  &\leq  2\left(\sum_{\zeta\in \Theta}  \langle  \delta_{\zeta}\Delta_{\zeta}x,\delta_{\zeta}\Delta_{\zeta} x\rangle_{\mathcal{A}}+ \sum_{\zeta\in \Theta}  \langle \theta_{\zeta} \Psi_{\zeta}x,\theta_{\zeta} \Psi_{\zeta} x\rangle_{\mathcal{A}}\right)\\
 		&\leq2 B( D_{\Delta}+D^{\prime})\langle x,x\rangle_{\mathcal{A}}.
 	\end{align*} 
And for every $ x\in\mathcal{H} $, we obtain 
 \begin{align*}
 \Vert \sum_{\zeta\in \Theta}  \langle( \theta_{\zeta} \Psi_{\zeta}+ \delta_{\zeta} \Delta_{\zeta})x,(\theta_{\zeta} \Psi_{\zeta}+ \delta_{\zeta}\Delta_{\zeta})x\rangle_{\mathcal{A}}\Vert^{\frac{1}{2}} &\geq \Vert \sum_{\zeta\in \Theta}  \langle  (\theta_{\zeta}\Psi_{\zeta})x,(\theta_{\zeta}\Psi_{\zeta}) x\rangle_{\mathcal{A}}\Vert^{\frac{1}{2}} \\ &-\Vert \sum_{\zeta\in \Theta}  \langle(  \delta_{\zeta} \Delta_{\zeta})x,( \delta_{\zeta}\Delta_{\zeta})x\rangle_{\mathcal{A}}\Vert^{\frac{1}{2}}\\
 	  	&\geq (\sqrt{AD}-\sqrt{BD_{\Delta}})\Vert x \Vert.
 	  \end{align*} 
 	 
Therefore $ \{ \theta_{\zeta}\Psi_{\zeta}+\delta_{\zeta}\Delta_{\zeta}\}_{\zeta\in \Theta} $ is a $ g- $frame for $ \mathcal{H} $ with respect to $\{ \mathcal{H}_{\zeta}\}_{\zeta\in \Theta} $.
 	\end{proof}
 	\begin{theorem} \label{t11}Let $ \Psi= \{\Psi_{\zeta}\} _{\zeta\in \Theta}$
 		and $  \Delta= \{\Delta_{\zeta}\}_{\zeta\in \Theta} $ be two $ g- $frames for $ \mathcal{H} $ with the synthesis operators $ \mathcal{T}_{\Psi} $, $ \mathcal{T}_{\Delta} $, respectively. Suppose that  $ \{\theta_{\zeta}\}_{\zeta\in \Theta} $ and $ \{\delta_{\zeta}\}_{\zeta\in \Theta} $
 		are two sequences from the algebra $\mathcal{A}$ such that $ 0< A<\vert \theta_{\zeta}\vert^{2} , \vert \delta_{\zeta}\vert^{2}<B< \infty$. If $ \mathcal{T}_{\Psi}\mathcal{T}^{\ast}_{\Delta} $ is positive, then  $ \{\theta_{\zeta}\Psi_{\zeta}+\delta_{\zeta}\Delta_{\zeta}\}_{\zeta\in \Theta} $ is a $ g- $frame for $ \mathcal{H} $ with respect to $ \{	\mathcal{H}_{\zeta}\}_{\zeta\in \Theta} $.
 		
 	\end{theorem}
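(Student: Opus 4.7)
The plan is to show that $\{\theta_\zeta\Psi_\zeta+\delta_\zeta\Delta_\zeta\}_{\zeta\in\Theta}$ is (i) a $g$-Bessel sequence for $\mathcal{H}$ and (ii) has strictly positive $g$-frame operator $\tilde{S}$; by Corollary~\ref{c32} these two facts together give that $\{\theta_\zeta\Psi_\zeta+\delta_\zeta\Delta_\zeta\}_{\zeta\in\Theta}$ is a $g$-frame for $\mathcal{H}$.

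For (i), I would reuse the first half of the proof of Theorem~\ref{t7}: applying the pointwise inequality $\langle a+b,a+b\rangle_{\mathcal{A}} \leq 2\langle a,a\rangle_{\mathcal{A}}+2\langle b,b\rangle_{\mathcal{A}}$ with $a=\theta_\zeta\Psi_\zeta x$ and $b=\delta_\zeta\Delta_\zeta x$, and then combining $|\theta_\zeta|^2,|\delta_\zeta|^2<B$ with the upper Bessel bounds of $\Psi$ and $\Delta$, gives an upper bound proportional to $2B(D_\Psi+D_\Delta)$.

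For (ii), I would expand
\[
\tilde{S}=\sum_{\zeta}\Psi_\zeta^{\ast}|\theta_\zeta|^{2}\Psi_\zeta+\sum_{\zeta}\Delta_\zeta^{\ast}|\delta_\zeta|^{2}\Delta_\zeta+\sum_{\zeta}\Psi_\zeta^{\ast}\theta_\zeta^{\ast}\delta_\zeta\Delta_\zeta+\sum_{\zeta}\Delta_\zeta^{\ast}\delta_\zeta^{\ast}\theta_\zeta\Psi_\zeta.
\]
Since $|\theta_\zeta|^{2}-A\geq 0$, the factorization $\Psi_\zeta^{\ast}(|\theta_\zeta|^{2}-A)\Psi_\zeta = \bigl((|\theta_\zeta|^{2}-A)^{1/2}\Psi_\zeta\bigr)^{\ast}\bigl((|\theta_\zeta|^{2}-A)^{1/2}\Psi_\zeta\bigr) \geq 0$ shows that $\sum_\zeta\Psi_\zeta^{\ast}|\theta_\zeta|^{2}\Psi_\zeta \geq A\,S_\Psi$, which is strictly positive by Proposition~\ref{p1}; the analogous bound holds for the second diagonal sum. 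For the cross sum I plan to mimic the reasoning of the corollary following Theorem~\ref{t3}: there, $M=N=I$ collapses the cross contribution to $2\mathcal{T}_\Psi\mathcal{T}_\Delta^{\ast}\geq 0$, and here I would argue that the $\zeta$-dependent weights $\theta_\zeta^{\ast}\delta_\zeta$ preserve the non-negativity of the cross sum in view of the hypothesis $\mathcal{T}_\Psi\mathcal{T}_\Delta^{\ast}=\sum_\zeta\Psi_\zeta^{\ast}\Delta_\zeta\geq 0$. Assembling the four pieces yields $\tilde{S}>0$, and Corollary~\ref{c32} closes the argument.

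The main obstacle is precisely this last step: transferring the positivity of the unweighted operator $\mathcal{T}_\Psi\mathcal{T}_\Delta^{\ast}$ to the weighted cross sum $\sum_\zeta\Psi_\zeta^{\ast}\theta_\zeta^{\ast}\delta_\zeta\Delta_\zeta + (\cdot)^{\ast}$ is clean only when the algebra elements $\theta_\zeta^{\ast}\delta_\zeta$ commute appropriately with the inner-product values appearing in $\mathcal{T}_\Psi\mathcal{T}_\Delta^{\ast}$ (for example, when $\mathcal{A}$ is commutative, or the weights are central and positive, as is consistent with the scalar-style inequalities $0<A<|\theta_\zeta|^{2}<B$ used in the hypothesis). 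In full non-commutative generality one would need an additional Cauchy--Schwarz-type estimate to ensure that the cross sum does not overwhelm the lower bound $A(S_\Psi+S_\Delta)$ already obtained from the diagonal terms.
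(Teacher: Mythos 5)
Your proposal follows essentially the same route as the paper's proof: establish the Bessel bound exactly as in Theorem \ref{t7}, then expand the quadratic form (equivalently, the frame operator $\tilde S$) into two diagonal sums plus a cross sum, bound the diagonal sums below using $A$ and the lower frame bounds of $\Psi$ and $\Delta$, and dispose of the cross sum via the positivity of $\mathcal{T}_{\Psi}\mathcal{T}_{\Delta}^{\ast}$. The only cosmetic difference is that you phrase the conclusion through Corollary \ref{c32} ($\tilde S>0$) while the paper works with the pointwise inequality and cites the lower bounds $\alpha,\beta$ directly; these are equivalent by Proposition \ref{p1}.

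The obstacle you flag at the end is, however, a genuine gap, and your proposal does not close it. The hypothesis $\mathcal{T}_{\Psi}\mathcal{T}_{\Delta}^{\ast}\geq 0$ says only that the \emph{unweighted} total $\sum_{\zeta}\langle \Psi_{\zeta}x,\Delta_{\zeta}x\rangle_{\mathcal{A}}+\sum_{\zeta}\langle \Delta_{\zeta}x,\Psi_{\zeta}x\rangle_{\mathcal{A}}$ is positive; the individual self-adjoint terms $\theta_{\zeta}\langle \Psi_{\zeta}x,\Delta_{\zeta}x\rangle_{\mathcal{A}}\delta_{\zeta}^{\ast}+\delta_{\zeta}\langle \Delta_{\zeta}x,\Psi_{\zeta}x\rangle_{\mathcal{A}}\theta_{\zeta}^{\ast}$ need not be positive, and even for positive scalar weights $t_{\zeta}$ one cannot deduce $\sum_{\zeta}t_{\zeta}c_{\zeta}\geq 0$ from $\sum_{\zeta}c_{\zeta}\geq 0$ when the $c_{\zeta}$ are of mixed sign. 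You should be aware that the paper's own proof has exactly the same hole: it pulls the factor $\min_{\zeta}\{|\delta_{\zeta}\theta_{\zeta}|,|\theta_{\zeta}\delta_{\zeta}|\}$ out of the cross sum as if the weights were a common positive scalar multiplying a termwise-positive sum, which is not a valid inequality. A related caveat applies to your diagonal estimate: with the paper's left-module convention, $\langle \theta_{\zeta}\Psi_{\zeta}x,\theta_{\zeta}\Psi_{\zeta}x\rangle_{\mathcal{A}}=\theta_{\zeta}\langle \Psi_{\zeta}x,\Psi_{\zeta}x\rangle_{\mathcal{A}}\theta_{\zeta}^{\ast}$, and $\theta a\theta^{\ast}\geq Aa$ does not follow from $\theta^{\ast}\theta\geq A\cdot 1$ for a positive $a$ that does not commute with $\theta$. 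So both your argument and the paper's are complete only under extra commutativity or centrality assumptions on the weights (e.g.\ $\mathcal{A}$ commutative, or $\theta_{\zeta},\delta_{\zeta}$ central with $\theta_{\zeta}\delta_{\zeta}^{\ast}$ a fixed positive scalar); in full non-commutative generality the lower bound is not established.
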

 \begin{proof}As in the proof of Theorem \ref{t7} $ \{ \theta_{\zeta}\Psi_{\zeta}+\delta_{\zeta}\Delta_{\zeta}\}_{\zeta\in \Theta} $ is a $ g- $Bessel sequence for $ \mathcal{H} $ with respect to $ \{	\mathcal{H}_{\zeta}\}_{\zeta\in \Theta} $. For every $ x\in\mathcal{H} $ 
 	\begin{align*}
 	 \sum_{\zeta\in \Theta}  \langle( \theta_{\zeta} \Psi_{\zeta}+ \delta_{\zeta} \Delta_{\zeta})x,(\theta_{\zeta} \Psi_{\zeta}+ \delta_{\zeta}\Delta_{\zeta})x\rangle_{\mathcal{A}}&=  \sum_{\zeta\in \Theta}  \langle  \delta_{\zeta}\Delta_{\zeta}x,\delta_{\zeta}\Delta_{\zeta} x\rangle_{\mathcal{A}}+ \sum_{\zeta\in \Theta}  \langle \theta_{\zeta} \Psi_{\zeta}x,\theta_{\zeta} \Psi_{\zeta} x\rangle_{\mathcal{A}}\\&+  \sum_{\zeta\in \Theta}  \langle  \theta_{\zeta}\Psi_{\zeta}x,\delta_{\zeta}\Delta_{\zeta} x\rangle_{\mathcal{A}}+
 	 \sum_{\zeta\in \Theta}  \langle  \delta_{\zeta}\Delta_{\zeta}x,\theta_{\zeta}\Psi_{\zeta} x\rangle_{\mathcal{A}}\\
 	& \geq A\left(  \sum_{\zeta\in \Theta}  \langle  \Delta_{\zeta}x,\Delta_{\zeta} x\rangle_{\mathcal{A}}+ \sum_{\zeta\in \Theta}  \langle  \Psi_{\zeta}x, \Psi_{\zeta} x\rangle_{\mathcal{A}}\right)\\
 	&+ \min_{\zeta \in \Theta} \lbrace\lvert \delta_{\zeta}\theta_{\zeta}\rvert,\lvert \theta_{\zeta}\delta_{\zeta}\rvert \rbrace\left( \sum_{\zeta\in \Theta}  \langle  \Psi_{\zeta}x,\Delta_{\zeta} x\rangle_{\mathcal{A}}+
 	 \sum_{\zeta\in \Theta}  \langle  \Delta_{\zeta}x,\Psi_{\zeta} x\rangle_{\mathcal{A}} \right)\\
 	&=A \left( \sum_{\zeta\in \Theta}  \langle  \Delta_{\zeta}x,\Delta_{\zeta} x\rangle_{\mathcal{A}}+ \sum_{\zeta\in \Theta}  \langle  \Psi_{\zeta}x, \Psi_{\zeta} x\rangle_{\mathcal{A}} \right)\\
 	&+\min_{\zeta \in \Theta}\lbrace\lvert \delta_{\zeta}\theta_{\zeta}\rvert,\lvert \theta_{\zeta}\delta_{\zeta}\rvert \rbrace \left(\langle \mathcal{T}_{\Psi}\mathcal{T}^{\ast}_{\Delta}x,x\rangle_{{\mathcal{A}}} +\langle \mathcal{T}_{\Delta}\mathcal{T}^{\ast}_{\Psi}x,x\rangle_{{\mathcal{A}}} \right).
 \end{align*} 
Then, we have 
\begin{equation*}
	 \sum_{\zeta\in \Theta}  \langle( \theta_{\zeta} \Psi_{\zeta}+ \delta_{\zeta} \Delta_{\zeta})x,(\theta_{\zeta} \Psi_{\zeta}+ \delta_{\zeta}\Delta_{\zeta})x\rangle_{\mathcal{A}}\geq A \left( \sum_{\zeta\in \Theta}  \langle  \Psi_{\zeta}x, \Psi_{\zeta} x\rangle_{\mathcal{A}}+\sum_{\zeta\in \Theta}  \langle  \Delta_{\zeta}x,\Delta_{\zeta} x\rangle_{\mathcal{A}}\right)\geq A(\alpha+\beta)\langle x,x\rangle_{\mathcal{A}}.
\end{equation*}	
Where $ \alpha $ and $\beta $  is a lower $ g- $frames bounds of $ \Psi $ and $\Delta$ respectively.
 \end{proof}
 	
\begin{corollary}Let $ \Psi= \{\Psi_{\zeta}\} _{\zeta\in \Theta}$
	and $  \Delta= \{\Delta_{\zeta}\}_{\zeta\in \Theta} $ are two tight $ g- $frame for $ \mathcal{H} $ with the synthesis operators $ \mathcal{T}_{\Psi} $, $ \mathcal{T}_{\Delta} $  respectively. If $ \mathcal{T}_{\Psi}\mathcal{T}^{\ast}_{\Delta}=0 $ , then  $ \{\Psi_{\zeta}+\Delta_{\zeta}\}_{\zeta\in \Theta} $ is a tight $ g- $frame for $ \mathcal{H} $ with respect to $ \{	\mathcal{H}_{\zeta}\}_{\zeta\in \Theta} $.
	
\end{corollary}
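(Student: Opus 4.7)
The plan is to reduce this to a direct computation of the $g$-frame operator of $\{\Psi_\zeta + \Delta_\zeta\}_{\zeta\in\Theta}$, using the formula obtained in Theorem \ref{t3} with $M=N=I$, and then exploit tightness to conclude that the resulting operator is a positive scalar multiple of the identity.

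First I would recall that, by the computation in the proof of Theorem \ref{t3} (specialized to $M = N = I$), the candidate $g$-frame operator $S$ of $\{\Psi_\zeta + \Delta_\zeta\}_{\zeta\in\Theta}$ is
\begin{equation*}
S \;=\; S_{\Psi} + \mathcal{T}_{\Psi}\mathcal{T}_{\Delta}^{\ast} + \mathcal{T}_{\Delta}\mathcal{T}_{\Psi}^{\ast} + S_{\Delta}.
\end{equation*}
Next I would observe that taking adjoints in the assumption $\mathcal{T}_{\Psi}\mathcal{T}_{\Delta}^{\ast}=0$ immediately gives $\mathcal{T}_{\Delta}\mathcal{T}_{\Psi}^{\ast}=0$ as well, so the cross terms vanish and $S = S_{\Psi} + S_{\Delta}$.

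Since $\Psi$ and $\Delta$ are tight $g$-frames, there exist constants $\lambda,\mu>0$ with $S_{\Psi} = \lambda I$ and $S_{\Delta} = \mu I$. Hence $S = (\lambda + \mu)I$. By Proposition \ref{p1}, this strictly positive scalar multiple of the identity being the frame operator means that $\{\Psi_\zeta + \Delta_\zeta\}_{\zeta\in\Theta}$ satisfies
\begin{equation*}
(\lambda+\mu)\langle x,x\rangle_{\mathcal{A}} \;\leq\; \sum_{\zeta\in\Theta}\langle (\Psi_\zeta+\Delta_\zeta)x,(\Psi_\zeta+\Delta_\zeta)x\rangle_{\mathcal{A}} \;\leq\; (\lambda+\mu)\langle x,x\rangle_{\mathcal{A}},
\end{equation*}
so it is a $(\lambda+\mu)$-tight $g$-frame. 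The Bessel property (needed to legitimately speak of a frame operator in the first place) is inherited from Theorem \ref{t3}, applied with $M=N=I$ and with $\theta_\zeta = \delta_\zeta = 1$ (which trivially satisfy the boundedness hypothesis).

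There is no real obstacle here; the only point requiring a brief justification is the vanishing of the second cross term $\mathcal{T}_{\Delta}\mathcal{T}_{\Psi}^{\ast}$, which follows by adjointness from $\mathcal{T}_{\Psi}\mathcal{T}_{\Delta}^{\ast}=0$. Everything else is assembling facts already established in Theorem \ref{t3} and Proposition \ref{p1}.
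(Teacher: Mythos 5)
Your proof is correct and follows essentially the same route as the paper: expand the sum/frame operator of $\{\Psi_\zeta+\Delta_\zeta\}$, observe that the cross terms $\mathcal{T}_\Psi\mathcal{T}_\Delta^{\ast}$ and its adjoint $\mathcal{T}_\Delta\mathcal{T}_\Psi^{\ast}$ vanish by hypothesis, and add the two tight bounds. The only cosmetic difference is that you phrase the computation at the level of the frame operator $S=S_\Psi+S_\Delta$ (via Theorem \ref{t3}) while the paper does the equivalent pointwise computation of $\sum_{\zeta}\langle(\Psi_\zeta+\Delta_\zeta)x,(\Psi_\zeta+\Delta_\zeta)x\rangle_{\mathcal{A}}$ by appealing to the proof of Theorem \ref{t11}.
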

 	
 	\begin{proof} Using the proof of Theorem \ref{t11}, for every $ x\in\mathcal{H}  $ we have 
 		\begin{align*}
 		 \sum_{\zeta\in \Theta}  \langle( \Psi_{\zeta}+  \Delta_{\zeta})x,( \Psi_{\zeta}+ \Delta_{\zeta})x\rangle_{\mathcal{A}} &	= ( \sum_{\zeta\in \Theta}  \langle  \Delta_{\zeta}x,\Delta_{\zeta} x\rangle_{\mathcal{A}}+ \sum_{\zeta\in \Theta}  \langle  \Psi_{\zeta}x, \Psi_{\zeta} x\rangle_{\mathcal{A}})\\ 
 		&=( \alpha_{1}+\alpha_{2})\langle x,x\rangle_{\mathcal{A}}.
 		\end{align*}
 		Where $  \sum\limits_{\zeta\in \Theta}  \langle  \Psi_{\zeta}x,\Psi_{\zeta} x\rangle_{\mathcal{A}}= \alpha_{1}\langle x,x\rangle_{\mathcal{A}} $ and $ \sum\limits_{\zeta\in \Theta}  \langle  \Delta_{\zeta}x,\Delta_{\zeta} x\rangle_{\mathcal{A}}= \alpha_{2}\langle x,x\rangle_{\mathcal{A}}. $
 	\end{proof}
 	 \begin{theorem} Let $ \Psi= \{\Psi_{\zeta}\} _{\zeta\in \Theta}$ be a $ g- $frame for $ \mathcal{H} $ and $  \Delta= \{\Delta_{\zeta}\}_{\zeta\in \Theta} $  be a $ g- $Bessel sequence for $ \mathcal{H} $ with the synthesis operators $ \mathcal{T}_{\Psi} $, $ \mathcal{T}_{\Delta} $, respectively and  $ \mathcal{T}_{\Psi}\mathcal{T}^{\ast}_{\Delta} $ is positive. If $ \Lambda\in \operatorname{End}_{\mathcal{A}}^{\ast}(\mathcal{H})  $ is an isometry, then  $ \{\Psi_{\zeta}+\Delta_{\zeta}\}_{\zeta\in \Theta} $ is a $ g- $frame for $ \mathcal{H} $ with respect to $ \{	\mathcal{H}_{\zeta\in \Theta}\}_{\zeta\in \Theta} $.
 	 	
 	 	\end{theorem}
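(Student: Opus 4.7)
The plan is to reduce this to Proposition \ref{p1} by computing the $g$-frame operator of $\{\Psi_\zeta+\Delta_\zeta\}_{\zeta\in\Theta}$ explicitly and showing it is bounded below by a positive multiple of the identity. The argument mirrors the corollary that follows Theorem \ref{t3}, with the point being that we only need $\Delta$ to be $g$-Bessel (not a full $g$-frame), since the lower bound will come entirely from $S_\Psi$.

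First I would verify that $\{\Psi_\zeta+\Delta_\zeta\}_{\zeta\in\Theta}$ is a $g$-Bessel sequence by the now-familiar $\langle a+b,a+b\rangle_{\mathcal{A}}\le 2\langle a,a\rangle_{\mathcal{A}}+2\langle b,b\rangle_{\mathcal{A}}$ estimate applied termwise, yielding a Bessel bound controlled by the Bessel bounds of $\Psi$ and $\Delta$. This ensures the associated synthesis operator $\mathcal{T}$ is well defined and bounded, and linearity of the synthesis construction gives $\mathcal{T}=\mathcal{T}_\Psi+\mathcal{T}_\Delta$.

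Next I would compute the $g$-frame operator directly: $S=\mathcal{T}\mathcal{T}^{\ast}=S_\Psi+\mathcal{T}_\Psi\mathcal{T}_\Delta^{\ast}+\mathcal{T}_\Delta\mathcal{T}_\Psi^{\ast}+S_\Delta$. The crucial observation is that $\mathcal{T}_\Delta\mathcal{T}_\Psi^{\ast}=(\mathcal{T}_\Psi\mathcal{T}_\Delta^{\ast})^{\ast}$, so positivity of $\mathcal{T}_\Psi\mathcal{T}_\Delta^{\ast}$ forces it to be self-adjoint, hence $\mathcal{T}_\Delta\mathcal{T}_\Psi^{\ast}=\mathcal{T}_\Psi\mathcal{T}_\Delta^{\ast}\ge 0$. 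Together with $S_\Delta\ge 0$, this gives the operator inequality $S\ge S_\Psi$.

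To close, since $\Psi$ is a $g$-frame, Proposition \ref{p1} supplies some $\alpha>0$ with $S_\Psi\ge\alpha I$, so $S\ge\alpha I$, and a second invocation of Proposition \ref{p1} yields that $\{\Psi_\zeta+\Delta_\zeta\}_{\zeta\in\Theta}$ is a $g$-frame. I do not expect a genuine obstacle here: the only delicate point is using that a positive adjointable operator is automatically self-adjoint in order to convert the hypothesis on $\mathcal{T}_\Psi\mathcal{T}_\Delta^{\ast}$ into the nonnegativity of the cross terms of $S$. I note that the hypothesis that $\Lambda\in\operatorname{End}_{\mathcal{A}}^{\ast}(\mathcal{H})$ is an isometry does not enter the proof of the stated conclusion, suggesting a typographical omission in the conclusion (perhaps $\{\Psi_\zeta\Lambda+\Delta_\zeta\}$ or $\{\Psi_\zeta+\Delta_\zeta\Lambda\}$ was intended); with that amendment one would use $\Lambda^{\ast}\Lambda=I$ to preserve the lower bound when composing with $\Lambda$.
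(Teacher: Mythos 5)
Your argument is correct and is essentially the paper's own proof: both expand the sum for $\{\Psi_\zeta+\Delta_\zeta\}$, discard the cross terms $\mathcal{T}_\Psi\mathcal{T}_\Delta^{\ast}+\mathcal{T}_\Delta\mathcal{T}_\Psi^{\ast}$ using positivity (and hence self-adjointness) of $\mathcal{T}_\Psi\mathcal{T}_\Delta^{\ast}$, and obtain the lower bound from $\Psi$ alone. Your diagnosis of the role of $\Lambda$ is also borne out: the paper's proof actually carries out the computation for $\{(\Psi_\zeta+\Delta_\zeta)\Lambda\}_{\zeta\in\Theta}$ and uses $\langle\Lambda x,\Lambda x\rangle_{\mathcal{A}}=\langle x,x\rangle_{\mathcal{A}}$ in the final step, so the stated conclusion indeed omits the composition with $\Lambda$.
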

 
 \begin{proof} 
 	Similar to the proof of Theorem \ref{t3}, let $ A $ be a lower $ g- $frame bound of $ \Psi $ and $ \{(\Psi_{\zeta}+\Delta_{\zeta})\Lambda\}_{\zeta\in \Theta} $ is a $ g- $Bessel sequence for $ \mathcal{H} $ with respect to $ \{	\mathcal{H}_{\zeta}\}_{\zeta\in \Theta} $.
 For every $ x\in\mathcal{H} $, we have 
 	\begin{align*}
 		 \sum_{\zeta\in \Theta} \langle (\Psi_{\zeta}+\Delta_{\zeta})\Lambda,(\Psi_{\zeta}+\Delta_{\zeta})\Lambda \rangle_{{\mathcal{A}}}  &=   \sum_{\zeta\in \Theta}  \langle  \Delta_{\zeta}\Lambda x,\Delta_{\zeta} \Lambda x\rangle_{\mathcal{A}}+ \sum_{\zeta\in \Theta}  \langle \Psi_{\zeta}\Lambda x, \Psi_{\zeta} \Lambda x\rangle_{\mathcal{A}}\\&+  \sum_{\zeta\in \Theta}  \langle  \Psi_{\zeta}\Lambda x,\Delta_{\zeta}\Lambda x\rangle_{\mathcal{A}}+
 		 \sum_{\zeta\in \Theta}  \langle  \Delta_{\zeta} \Lambda x,\Psi_{\zeta} \Lambda x\rangle_{\mathcal{A}}\\
 		&=  \sum_{\zeta\in \Theta}  \langle  \Delta_{\zeta}\Lambda x,\Delta_{\zeta}\Lambda x\rangle_{\mathcal{A}}+ \sum_{\zeta\in \Theta}  \langle  \Psi_{\zeta}\Lambda x, \Psi_{\zeta} \Lambda x\rangle_{\mathcal{A}}\\
 		&+\langle \mathcal{T}_{\Psi}\mathcal{T}^{\ast}_{\Delta}\Lambda x,\Lambda x\rangle_{{\mathcal{A}}} +\langle \mathcal{T}_{\Delta}\mathcal{T}^{\ast}_{\Psi}\Lambda x,\Lambda x\rangle_{{\mathcal{A}}} .
 	\end{align*}
 So, 
 
 \begin{equation*}
 		 \sum_{\zeta\in \Theta} \langle (\Psi_{\zeta}+\Delta_{\zeta})\Lambda,(\Psi_{\zeta}+\Delta_{\zeta})\Lambda \rangle_{{\mathcal{A}}}  \geq  \sum_{\zeta\in \Theta}  \langle  \Psi_{\zeta}\Lambda x, \Psi_{\zeta} \Lambda x\rangle_{\mathcal{A}} \geq A \Vert \Lambda x \Vert^{2} =A\langle x,x\rangle_{\mathcal{A}},
 \end{equation*} 

 \end{proof}

\begin{theorem}
	
	Let $ \Psi=\{\Psi_{\zeta} \in \operatorname{End}_{\mathcal{A}}^{\ast}(\mathcal{H},\mathcal{H}_{\zeta}): \zeta\in \Theta\} $ be a $ g- $frame for $ \mathcal{H} $ with bounds $D$ and $D^{\prime}$, let $ \{\Delta_{\zeta}\}_{\zeta\in \Theta}$ be a $ g- $Bessel sequence for $ \mathcal{H} $ with Bessel bound $ D_{\Delta} $. If $ M,N\in\operatorname{End}_{\mathcal{A}}^{\ast}(\mathcal{H}) $ such that for any $ x \in \mathcal{H}$, $ \Vert Nx\Vert> \lambda \Vert x\Vert $ for some $ \lambda>0 $ and $ D_{\Delta}<D $, then $ \{ \Psi_{\zeta}M+\Delta_{\zeta}N\}_{\zeta\in \Theta} $ is a $ g- $ frame for  $ \mathcal{H} $ with respect to $\{ \mathcal{H}_{\zeta}\}_{\zeta\in \Theta} $.
	
\end{theorem}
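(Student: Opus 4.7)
The approach is to mirror the proof of Theorem \ref{t7}: establish the Bessel (upper) bound via a $2(a^{2}+b^{2})$-type estimate, then derive the lower frame bound from a reverse triangle inequality in the $C^{*}$-norm, comparing the $\Psi M$-contribution against the $\Delta N$-contribution.

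For the upper bound, I would combine the Cauchy-Schwarz-type estimate of Lemma 2.2 (which gives $\langle Mx,Mx\rangle_{\mathcal{A}}\le \|M\|^{2}\langle x,x\rangle_{\mathcal{A}}$, and similarly for $N$) with the pointwise inequality $\langle u+v,u+v\rangle_{\mathcal{A}}\le 2\langle u,u\rangle_{\mathcal{A}}+2\langle v,v\rangle_{\mathcal{A}}$ in $\mathcal{A}$ to produce
\begin{equation*}
\sum_{\zeta\in \Theta}\langle(\Psi_{\zeta} M+\Delta_{\zeta} N)x,(\Psi_{\zeta} M+\Delta_{\zeta} N)x\rangle_{\mathcal{A}}\le 2\bigl(D'\|M\|^{2}+D_{\Delta}\|N\|^{2}\bigr)\langle x,x\rangle_{\mathcal{A}},
\end{equation*}
which immediately yields that $\{\Psi_{\zeta} M+\Delta_{\zeta} N\}_{\zeta\in \Theta}$ is $g$-Bessel.

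For the lower bound, I would pass to $C^{*}$-norms and apply the reverse triangle inequality
\begin{equation*}
\Bigl\|\sum_{\zeta}\langle(\Psi_{\zeta} M+\Delta_{\zeta} N)x,(\Psi_{\zeta} M+\Delta_{\zeta} N)x\rangle_{\mathcal{A}}\Bigr\|^{1/2}\ge \Bigl\|\sum_{\zeta}\langle\Psi_{\zeta} Mx,\Psi_{\zeta} Mx\rangle_{\mathcal{A}}\Bigr\|^{1/2}-\Bigl\|\sum_{\zeta}\langle\Delta_{\zeta} Nx,\Delta_{\zeta} Nx\rangle_{\mathcal{A}}\Bigr\|^{1/2},
\end{equation*}
exactly as in the closing display of Theorem \ref{t7}. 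I would then minorize the first right-hand term by $\sqrt{D}\,\|Mx\|$ via the $g$-frame lower bound of $\Psi$ applied to the vector $Mx$, and majorize the second by $\sqrt{D_{\Delta}}\,\|Nx\|$ via the Bessel bound of $\Delta$. The bounded-below hypothesis ($\|Nx\|\ge\lambda\|x\|$) together with the boundedness of the other operator should then be fed in to turn the difference into a strictly positive multiple of $\|x\|$.

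The main obstacle I foresee is converting the hypothesis $D_{\Delta}<D$ into strict positivity of $\sqrt{D}\,\|Mx\|-\sqrt{D_{\Delta}}\,\|Nx\|$. The inequality $D_{\Delta}<D$ alone does not balance the two norms; one really needs $D\,\|Mx\|^{2}>D_{\Delta}\,\|Nx\|^{2}$ for every $x$, so the bounded-below hypothesis must be paired with a matching upper bound on the opposite operator in order to obtain a coefficient like $\sqrt{D}\,\lambda-\sqrt{D_{\Delta}}\,\|N\|$ that is provably positive. The delicate step is this bookkeeping of which operator supplies the coercivity and which absorbs the norm factor; once that is lined up, the remaining arithmetic merely reuses the estimates displayed above.
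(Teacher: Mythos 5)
Your outline reproduces the paper's own argument: the upper bound via the $2(a^2+b^2)$ estimate and Lemma 2.2, and the lower bound via the reverse triangle inequality in the $C^{*}$-norm, minorizing the $\Psi M$ term by $\sqrt{D}\,\Vert Mx\Vert$ and majorizing the $\Delta N$ term by $\sqrt{D_{\Delta}}\,\Vert Nx\Vert$. That is exactly the route the paper takes. But the obstacle you flag at the end is not a bookkeeping detail you can defer --- it is a genuine gap, and you have not closed it. The paper itself closes it only by writing
\begin{equation*}
\sqrt{D}\,\Vert Mx\Vert-\sqrt{D_{\Delta}}\,\Vert Nx\Vert\;\geq\;(\sqrt{D}-\sqrt{D_{\Delta}})\,\Vert Nx\Vert\;\geq\;\lambda(\sqrt{D}-\sqrt{D_{\Delta}})\,\Vert x\Vert,
\end{equation*}
and the first inequality presupposes $\Vert Mx\Vert\geq\Vert Nx\Vert$ for every $x$, which appears nowhere in the hypotheses. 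So your diagnosis is correct: $D_{\Delta}<D$ together with $\Vert Nx\Vert>\lambda\Vert x\Vert$ does not balance the two terms, and no amount of rearranging the displayed estimates will produce a positive lower bound without an extra assumption.

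Indeed the statement as given fails outright: take $M=0$, $N=I$, and $\Delta_{\zeta}=\varepsilon\Psi_{\zeta}P$ for a non-injective adjointable idempotent $P$ and $\varepsilon$ small enough that $D_{\Delta}<D$; then $\{\Psi_{\zeta}M+\Delta_{\zeta}N\}_{\zeta\in\Theta}=\{\varepsilon\Psi_{\zeta}P\}_{\zeta\in\Theta}$ annihilates $\ker P$ and cannot be a $g$-frame, although all stated hypotheses hold. To make the argument work you need a hypothesis that couples the two operators, for instance $\Vert Mx\Vert\geq\Vert Nx\Vert$ for all $x$ (which is what the paper tacitly uses), or $M$ bounded below by some $\mu>0$ together with $\sqrt{D}\,\mu>\sqrt{D_{\Delta}}\,\Vert N\Vert$, in which case the lower bound becomes $(\sqrt{D}\,\mu-\sqrt{D_{\Delta}}\,\Vert N\Vert)^{2}$. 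As written, your proposal (like the paper's proof) stops exactly at the step that cannot be justified.
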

\begin{proof}	Similar to the proof of Theorem \ref{t3}, $ \{(\Psi_{\zeta}+\Delta_{\zeta})M\}_{\zeta\in \Theta} $ is a $ g- $Bessel sequence for $ \mathcal{H} $ with respect to $ \{	\mathcal{H}_{\zeta}\}_{\zeta\in \Theta} $. For any $ x\in\mathcal{H} $, we obtain 
	\begin{align*}
	\Vert \sum_{\zeta\in \Theta} \langle (\Psi_{\zeta}+\Delta_{\zeta})M,(\Psi_{\zeta}+\Delta_{\zeta})M \rangle_{{\mathcal{A}}} \Vert^{\frac{1}{2}}  &\geq   \Vert\sum_{\zeta\in \Theta}  \langle \Psi_{\zeta}M x, \Psi_{\zeta} Mx\rangle_{\mathcal{A}}\Vert^{\frac{1}{2}}  - \Vert\sum_{\zeta\in \Theta}  \langle  \Delta_{\zeta}Nx,\Delta_{\zeta} N x\rangle_{\mathcal{A}}\Vert^{\frac{1}{2}}  \\&\geq  \sqrt{D}  \Vert   M x\Vert -\sqrt{D_{\Delta}} 
	\Vert N x\Vert \\
	&\geq (\sqrt{D}-\sqrt{D_{\Delta}})\Vert Nx \Vert  \\
 &\geq \lambda(\sqrt{D}-\sqrt{D_{\Delta}})\Vert x \Vert  .
\end{align*}
Therefore, $ \lambda^{2}(\sqrt{D}-\sqrt{D_{\Delta}})^{2} $ is a lower $ g- $frame bound for $ \{(\Psi_{\zeta}+\Delta_{\zeta})M\}_{\zeta\in \Theta} $. 
\end{proof}

\begin{corollary}	Let $ \{\Psi_{\zeta} \in \operatorname{End}_{\mathcal{A}}^{\ast}(\mathcal{H},\mathcal{H}_{\zeta}): \zeta\in \Theta\} $ be a $ g- $frame for $ \mathcal{H} $ with bound $ D $ and  $D^{\prime}$, let $ \{\Delta_{\zeta}\}_{\zeta\in \Theta}$ be a $ g- $Bessel sequence for $ \mathcal{H} $ with Bessel bound $ D_{\Delta} $. If $ D_{\Delta}<D $, then, $ \{\Psi_{\zeta}+\Delta_{\zeta}\}_{\zeta\in \Theta} $ is a $ g- $frame for $ \mathcal{H} $  
with respect to $\{ \mathcal{H}_{\zeta}\}_{\zeta\in \Theta} $.	
\end{corollary}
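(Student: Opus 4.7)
The plan is to deduce this corollary directly from the preceding theorem by specializing the operators $M$ and $N$ to the identity $I \in \operatorname{End}_{\mathcal{A}}^{\ast}(\mathcal{H})$. With this choice, the composed sequence $\{\Psi_{\zeta}M + \Delta_{\zeta}N\}_{\zeta \in \Theta}$ collapses exactly to $\{\Psi_{\zeta} + \Delta_{\zeta}\}_{\zeta \in \Theta}$, which is what we need to analyze.

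First I would verify that all hypotheses of the preceding theorem are met. The $g$-frame hypothesis on $\Psi$ and the $g$-Bessel hypothesis on $\Delta$ are kept intact, and the bound inequality $D_{\Delta} < D$ is precisely what is assumed in the corollary. The only nontrivial condition to check is the lower bound $\|Nx\| > \lambda \|x\|$ for the operator $N$; since $N = I$, we have $\|Nx\| = \|x\|$, so this holds (for any $\lambda \in (0,1)$, or with equality and $\lambda = 1$).

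Then invoking the preceding theorem, we conclude that $\{\Psi_{\zeta} + \Delta_{\zeta}\}_{\zeta \in \Theta}$ is a $g$-frame for $\mathcal{H}$ with respect to $\{\mathcal{H}_{\zeta}\}_{\zeta \in \Theta}$. Tracing through the lower bound produced there with $\lambda = 1$ and $M = I$, a valid lower $g$-frame bound is $(\sqrt{D} - \sqrt{D_{\Delta}})^{2}$, which is strictly positive thanks to $D_{\Delta} < D$. An upper bound, if one wishes to record it, can be read off from the Bessel estimate used in that theorem's proof, giving $2(D' + D_{\Delta})$.

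There is essentially no obstacle here: the corollary is a clean specialization, and the only point requiring minor care is matching the hypothesis $\|Nx\| > \lambda\|x\|$ when $N = I$, which is immediate. The entire argument is a one-line application of the previous theorem.
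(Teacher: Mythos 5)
Your proposal is correct and matches the paper's intent exactly: the corollary is stated without proof precisely because it is the specialization $M=N=I$ of the preceding theorem, and your verification that the hypothesis $\Vert Nx\Vert>\lambda\Vert x\Vert$ holds for $N=I$ with any $\lambda\in(0,1)$ is the only point needing mention. The resulting lower bound $(\sqrt{D}-\sqrt{D_{\Delta}})^{2}$ you extract is the same one the theorem's proof produces.
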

\begin{theorem}Let $\Psi= \{\Psi_{\zeta}\} _{\zeta\in \Theta}$ be a $ \alpha_{1}- $tight $ g- $frame for $ \mathcal{H} $ and $  \Delta= \{\Delta_{\zeta}\}_{\zeta\in \Theta} $ be a $ \alpha_{2}- $tight $ g- $frame for $ \mathcal{H} $ with the synthesis operators $ \mathcal{T}_{\Psi} $, $ \mathcal{T}_{\Delta} $, respectively, and $ \mathcal{T}_{\Psi}\mathcal{T}^{\ast}_{\Delta}=0 $. If $ M,N \in \operatorname{End}_{\mathcal{A}}^{\ast}(\mathcal{H}) $, then  $ \{\Psi_{\zeta}M+\Delta_{\zeta}N\}_{\zeta\in \Theta} $ is a $ \alpha- $tight $ g- $frame for $ \mathcal{H} $ with respect to $ \{	\mathcal{H}_{\zeta\in \Theta}\}_{\zeta\in \Theta} $ if and only if $ \alpha_{1}M^{\ast}M+\alpha_{2}N^{\ast}N= \alpha I $, for some $ \alpha>0 $.
	
\end{theorem}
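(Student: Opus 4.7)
The plan is to apply Theorem \ref{t3} with the given $M$ and $N$, then use the tightness of $\Psi$ and $\Delta$ together with the orthogonality condition $\mathcal{T}_{\Psi}\mathcal{T}_{\Delta}^{\ast}=0$ to reduce the $g$-frame operator of $\{\Psi_{\zeta}M+\Delta_{\zeta}N\}_{\zeta\in\Theta}$ to a transparent expression in $M^{\ast}M$ and $N^{\ast}N$. From there the iff statement becomes a direct algebraic equivalence.

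First I would note that, exactly as in the first half of the proof of Theorem \ref{t3}, boundedness of $M$ and $N$ forces $\{\Psi_{\zeta}M+\Delta_{\zeta}N\}_{\zeta\in\Theta}$ to be a $g$-Bessel sequence. Its $g$-frame operator $S$ is, by Theorem \ref{t3},
$$S=M^{\ast}S_{\Psi}M+M^{\ast}\mathcal{T}_{\Psi}\mathcal{T}_{\Delta}^{\ast}N+N^{\ast}\mathcal{T}_{\Delta}\mathcal{T}_{\Psi}^{\ast}M+N^{\ast}S_{\Delta}N.$$
Taking adjoints of the hypothesis $\mathcal{T}_{\Psi}\mathcal{T}_{\Delta}^{\ast}=0$ yields $\mathcal{T}_{\Delta}\mathcal{T}_{\Psi}^{\ast}=0$, so both cross terms vanish. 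Tightness gives $S_{\Psi}=\alpha_{1}I$ and $S_{\Delta}=\alpha_{2}I$, hence
$$S=\alpha_{1}M^{\ast}M+\alpha_{2}N^{\ast}N.$$

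Next I would invoke the definition of an $\alpha$-tight $g$-frame: a $g$-Bessel sequence is $\alpha$-tight if and only if its $g$-frame operator equals $\alpha I$. Combined with the displayed formula for $S$, this is exactly the condition $\alpha_{1}M^{\ast}M+\alpha_{2}N^{\ast}N=\alpha I$. For the forward direction, this identification with $\alpha I$ (with $\alpha>0$) is what makes the family an actual $g$-frame; for the backward direction, once the equality holds with $\alpha>0$ the operator $S$ is strictly positive, so Corollary \ref{c32} ensures that $\{\Psi_{\zeta}M+\Delta_{\zeta}N\}_{\zeta\in\Theta}$ is a $g$-frame with $g$-frame operator $\alpha I$, i.e.\ an $\alpha$-tight $g$-frame.

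There is no serious obstacle here; the only subtlety is to recognize that $\mathcal{T}_{\Delta}\mathcal{T}_{\Psi}^{\ast}=0$ is automatic from the hypothesis by taking adjoints, so that both mixed terms in the Theorem \ref{t3} expansion disappear simultaneously. Once that is in place, the proof reduces to combining the operator identity for $S$ with the defining equation of tightness.
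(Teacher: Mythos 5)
Your proof is correct and follows essentially the same route as the paper: both arguments kill the cross terms using $\mathcal{T}_{\Psi}\mathcal{T}_{\Delta}^{\ast}=0$ (the paper via $\sum_{\zeta}\Psi_{\zeta}^{\ast}\Delta_{\zeta}=0$ inside the pointwise sum, you via the operator formula of Theorem \ref{t3}) and then use tightness to reduce the frame operator to $\alpha_{1}M^{\ast}M+\alpha_{2}N^{\ast}N$. Your operator-level phrasing has the minor advantage of making the ``only if'' direction explicit, which the paper's computation leaves implicit.
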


\begin{proof}According to the proof of Theorem \ref{t3} $ \{(\Psi_{\zeta}+\Delta_{\zeta})M\}_{\zeta\in \Theta} $ is a $ g- $Bessel sequence for $ \mathcal{H} $. Since $ \mathcal{T}_{\Psi}\mathcal{T}^{\ast}_{\Delta}=0$, for any $ x\in\mathcal{H} $, we get $ \sum_{\zeta\in \Theta}  \Psi_{\zeta}^{\ast}\Delta_{\zeta}x =0$.

For every $ x\in \mathcal{H}$, we have
	\begin{equation*}
		\sum_{\zeta\in \Theta} \langle \Psi_{\zeta}M x, \Delta_{\zeta}Nx\rangle_{{\mathcal{A}}} = \sum_{\zeta\in \Theta} \langle M x,\Psi_{\zeta}^{\ast} \Delta_{\zeta}Nx\rangle_{{\mathcal{A}}} =\langle M x,\sum_{\zeta\in \Theta}  \Psi_{\zeta}^{\ast}\Delta_{\zeta}Nx\rangle_{{\mathcal{A}}}= 0.
	\end{equation*}
	
	We obtain, 
	\begin{align*}
	 \sum_{\zeta\in \Theta} \langle (\Psi_{\zeta}M+\Delta_{\zeta}N)x,(\Psi_{\zeta}M+\Delta_{\zeta}N)x \rangle_{{\mathcal{A}}}  &=   \sum_{\zeta\in \Theta}  \langle  \Delta_{\zeta}Nx,\Delta_{\zeta} N x\rangle_{\mathcal{A}}+ \sum_{\zeta\in \Theta}  \langle \Psi_{\zeta}M x, \Psi_{\zeta} M x\rangle_{{\mathcal{A}}}
\end{align*}
Then, we get
\begin{align*}
	   \sum_{\zeta\in \Theta}  \langle  \Delta_{\zeta}Nx,\Delta_{\zeta} N x\rangle_{\mathcal{A}}+ \sum_{\zeta\in \Theta}  \langle \Psi_{\zeta}M x, \Psi_{\zeta} M x\rangle_{{\mathcal{A}}}& = \alpha_{1}\Vert M x\Vert ^{2}+ \alpha_{2} \Vert Nx \Vert^{2} \\
	  &= \langle \alpha_{1}M^{*}M x,x\rangle_{{\mathcal{A}}}+ \langle \alpha_{2}N^{*}Nx,x\rangle_{{\mathcal{A}}}\\
	&= \langle \alpha_{1}M^{*}M x+\alpha_{2} N^{*}Nx,x\rangle_{{\mathcal{A}}}\\
		&= \langle \alpha Ix,x\rangle_{{\mathcal{A}}}\\
			&= \alpha  \langle x,x\rangle_{{\mathcal{A}}}.
\end{align*}
	\end{proof}
\begin{corollary}
Let $ \Psi= \{\Psi_{\zeta}\} _{\zeta\in \Theta}$ be a $ \alpha_{1}- $tight $ g- $frame for $ \mathcal{H} $ and $  \Delta= \{\Delta_{\zeta}\}_{\zeta\in \Theta} $ be a $ \alpha_{2}- $tight $ g- $frame for $ \mathcal{H} $ for $ \mathcal{H} $ with the synthesis operators $ \mathcal{T}_{\Psi} $, $ \mathcal{T}_{\Delta} $, respectively. If $ \mathcal{T}_{\Psi}\mathcal{T}^{\ast}_{\Delta}=0 $, then  $ \{\Psi_{\zeta}+\Delta_{\zeta}\}_{\zeta\in \Theta} $ is a $ \alpha- $tight $ g- $frame for $ \mathcal{H} $ with respect to $ \{	\mathcal{H}_{\zeta\in \Theta}\}_{\zeta\in \Theta} $ if and only if $ \alpha_{1}+\alpha_{2}= \alpha  $.
\end{corollary}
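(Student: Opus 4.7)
The plan is to recognize this corollary as the immediate specialization of the preceding theorem to the case $M = N = I$. Since $I \in \operatorname{End}_{\mathcal{A}}^{\ast}(\mathcal{H})$, the hypotheses of the theorem are automatically satisfied whenever the hypotheses of the corollary are: both $\Psi$ and $\Delta$ are tight $g$-frames (with constants $\alpha_1$ and $\alpha_2$), and $\mathcal{T}_{\Psi}\mathcal{T}_{\Delta}^{\ast} = 0$.

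Applying the theorem with $M = N = I$, the sequence $\{\Psi_{\zeta} I + \Delta_{\zeta} I\}_{\zeta \in \Theta} = \{\Psi_{\zeta} + \Delta_{\zeta}\}_{\zeta \in \Theta}$ is an $\alpha$-tight $g$-frame for $\mathcal{H}$ with respect to $\{\mathcal{H}_{\zeta}\}_{\zeta \in \Theta}$ if and only if
\begin{equation*}
    \alpha_{1} I^{\ast} I + \alpha_{2} I^{\ast} I = \alpha I.
\end{equation*}
Since $I^{\ast} I = I$, this reduces to $(\alpha_{1} + \alpha_{2}) I = \alpha I$, which holds if and only if $\alpha_{1} + \alpha_{2} = \alpha$. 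This gives the desired equivalence.

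There is essentially no obstacle here: the entire content of the corollary is packaged inside the preceding theorem, and the only work is to substitute $M = N = I$ and simplify $I^{\ast} I = I$ on each side of the characterizing identity. The proof can therefore be written in one or two sentences, citing the theorem directly.
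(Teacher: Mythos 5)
Your proposal is correct and is exactly the argument the paper intends: the corollary carries no separate proof in the source precisely because it is the specialization $M=N=I$ of the preceding theorem, under which the characterizing identity $\alpha_{1}M^{\ast}M+\alpha_{2}N^{\ast}N=\alpha I$ collapses to $(\alpha_{1}+\alpha_{2})I=\alpha I$, i.e.\ $\alpha_{1}+\alpha_{2}=\alpha$. Nothing further is needed.
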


In what follows we study the stability of $ g- $frames for Hilbert $ C^{\ast}- $module $ \mathcal{H} $  under some of perturbations.
\begin{proposition} Let $ \Psi= \{\Psi_{\zeta}\} _{\zeta\in \Theta}$
 be a $ g- $frame for $ \mathcal{H} $ with bounds $ C $, $ D $ and $  \Delta= \{\Delta_{\zeta}\}_{\zeta\in \Theta} $. Let    $ \{\theta_{\zeta}\}_{\zeta\in \Theta} $ and $ \{\delta_{\zeta}\}_{\zeta\in \Theta} $
are two sequences from the algebra $\mathcal{A}$ such that $ 0< A<\vert \theta_{\zeta}\vert^{2} , \vert \delta_{\zeta}\vert^{2}<B< \infty$ and $ 0< \alpha_{1},\alpha_{2}<1 $ such that for any $ x\in \mathcal{H} $, we have   
\begin{align*}
\left( \Vert \sum_{\zeta\in \Theta}  \langle \theta_{\zeta} \Psi_{\zeta}x,\theta_{\zeta}\Psi_{\zeta} x\rangle_{\mathcal{A}}\Vert - \Vert\sum_{\zeta\in \Theta}  \langle \delta_{\zeta} \Delta_{\zeta}x,\delta_{\zeta}\Delta_{\zeta} x\rangle_{\mathcal{A}}\Vert\right)^{\frac{1}{2}}&\leq \alpha_{1}    \left( \Vert \sum_{\zeta\in \Theta}  \langle \theta_{\zeta} \Psi_{\zeta}x,\theta_{\zeta}\Psi_{\zeta} x\rangle_{\mathcal{A}}\Vert\right)^{\frac{1}{2}}\\	&+\alpha_{2}\left(\Vert\sum_{\zeta\in \Theta}  \langle \delta_{\zeta} \Delta_{\zeta}x,\delta_{\zeta}\Delta_{\zeta} x\rangle_{\mathcal{A}}\Vert\right)^{\frac{1}{2}}.
\end{align*}

 Then  $\{ \Delta_{\zeta}\}_{\zeta\in \Theta} $ is a $ g- $frame for $ \mathcal{H} $.

\end{proposition}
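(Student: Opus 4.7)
The plan is to use the $g$-frame property of $\Psi$ and the perturbation inequality to sandwich $\Vert\sum_{\zeta\in\Theta}\langle\Delta_\zeta x,\Delta_\zeta x\rangle_{\mathcal{A}}\Vert$ between positive multiples of $\Vert x\Vert^{2}$, which is the norm form of the $g$-frame condition used throughout the paper. Writing $\Sigma_\Psi(x):=\sum_{\zeta}\langle\theta_\zeta\Psi_\zeta x,\theta_\zeta\Psi_\zeta x\rangle_{\mathcal{A}}$ and $\Sigma_\Delta(x):=\sum_{\zeta}\langle\delta_\zeta\Delta_\zeta x,\delta_\zeta\Delta_\zeta x\rangle_{\mathcal{A}}$, I would proceed in three stages: convert the $g$-frame bounds of $\Psi$ into two-sided control of $\Vert\Sigma_\Psi(x)\Vert$; push this across to $\Vert\Sigma_\Delta(x)\Vert$ via the perturbation inequality; then strip off the $\delta_\zeta$ weights.

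Stages one and three are symmetric and mirror the calculation already used in Theorem~\ref{t7}. From the identity $\langle\theta_\zeta\Psi_\zeta x,\theta_\zeta\Psi_\zeta x\rangle_{\mathcal{A}}=\theta_\zeta\langle\Psi_\zeta x,\Psi_\zeta x\rangle_{\mathcal{A}}\theta_\zeta^{*}$ together with the order hypothesis $A<|\theta_\zeta|^{2}<B$ one obtains $A\sum_{\zeta}\langle\Psi_\zeta x,\Psi_\zeta x\rangle_{\mathcal{A}}\leq\Sigma_\Psi(x)\leq B\sum_{\zeta}\langle\Psi_\zeta x,\Psi_\zeta x\rangle_{\mathcal{A}}$, and the $g$-frame bounds $C,D$ of $\Psi$ then give $AC\Vert x\Vert^{2}\leq\Vert\Sigma_\Psi(x)\Vert\leq BD\Vert x\Vert^{2}$. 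The same identity with $\delta_\zeta$ in place of $\theta_\zeta$ shows that any two-sided control $m\Vert x\Vert^{2}\leq\Vert\Sigma_\Delta(x)\Vert\leq M\Vert x\Vert^{2}$ transports, using $A<|\delta_\zeta|^{2}<B$, to $(m/B)\Vert x\Vert^{2}\leq\Vert\sum_{\zeta}\langle\Delta_\zeta x,\Delta_\zeta x\rangle_{\mathcal{A}}\Vert\leq(M/A)\Vert x\Vert^{2}$, which is precisely the $g$-frame condition for $\Delta$.

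The substance of the argument is stage two. Set $u:=\Vert\Sigma_\Psi(x)\Vert^{1/2}$ and $v:=\Vert\Sigma_\Delta(x)\Vert^{1/2}$; the hypothesis reads $\sqrt{u^{2}-v^{2}}\leq\alpha_{1}u+\alpha_{2}v$. Squaring and regrouping gives the quadratic inequality $(1-\alpha_{1}^{2})u^{2}-2\alpha_{1}\alpha_{2}uv-(1+\alpha_{2}^{2})v^{2}\leq 0$, whose solution in $u/v$ (using $\alpha_{1}<1$) furnishes $u\leq Kv$ with $K=(\alpha_{1}\alpha_{2}+\sqrt{1+\alpha_{2}^{2}-\alpha_{1}^{2}})/(1-\alpha_{1}^{2})>0$, so $\Vert\Sigma_\Psi(x)\Vert\leq K^{2}\Vert\Sigma_\Delta(x)\Vert$; combined with stage one this is the lower $g$-frame bound for $\Delta$. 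The symmetric manipulation, reading the inequality as $|u^{2}-v^{2}|^{1/2}\leq\alpha_{1}u+\alpha_{2}v$, furnishes $v\leq K'u$ and hence the $g$-Bessel upper bound. This second stage is also where the main obstacle lies: as literally printed the hypothesis is vacuous whenever $v>u$, so the Bessel direction requires either the absolute-value reading above (the natural interpretation that makes the inequality nontrivial on both sides) or a separate a~priori assumption that $\Delta$ is $g$-Bessel. Once that interpretive point is settled, the remainder is bookkeeping with the quadratic and the scalar bounds $A,B$ on $|\theta_\zeta|^{2}$ and $|\delta_\zeta|^{2}$.
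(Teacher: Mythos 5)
Your proposal is correct and its overall architecture coincides with the paper's: both reduce the problem to a two-sided comparison between $u=\Vert\sum_{\zeta}\langle\theta_\zeta\Psi_\zeta x,\theta_\zeta\Psi_\zeta x\rangle_{\mathcal{A}}\Vert^{1/2}$ and $v=\Vert\sum_{\zeta}\langle\delta_\zeta\Delta_\zeta x,\delta_\zeta\Delta_\zeta x\rangle_{\mathcal{A}}\Vert^{1/2}$, and then transfer the frame bounds $C,D$ of $\Psi$ across the scalar bounds $A,B$. Where you differ is in how that comparison is extracted from the hypothesis: the paper uses subadditivity of the square root, $u\le\sqrt{u^{2}-v^{2}}+v$, and then absorbs terms to get $(1-\alpha_{1})u\le(1+\alpha_{2})v$ and, symmetrically, $(1-\alpha_{2})v\le(1+\alpha_{1})u$; you instead square the hypothesis and solve the resulting quadratic in $u/v$, obtaining $u\le Kv$ with $K=\bigl(\alpha_{1}\alpha_{2}+\sqrt{1+\alpha_{2}^{2}-\alpha_{1}^{2}}\bigr)/(1-\alpha_{1}^{2})$. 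Both are valid; the paper's route yields cleaner constants and no discriminant computation, while yours is more systematic and makes explicit where $\alpha_{1}<1$ enters. Your closing caveat is well taken and applies equally to the paper: the hypothesis as printed presupposes $u\ge v$, and the paper's unexplained ``on the other hand'' step asserting $(1-\alpha_{2})v\le(1+\alpha_{1})u$ rests on exactly the reading (or the a priori Bessel assumption on $\Delta$) you identify --- indeed, under the literal reading $v\le u$ holds outright and the upper bound is immediate. One further shared weakness, which is a defect of the statement rather than of your argument: passing from $A<\vert\theta_\zeta\vert^{2},\vert\delta_\zeta\vert^{2}<B$ to $A\sum_{\zeta}\langle\Psi_\zeta x,\Psi_\zeta x\rangle_{\mathcal{A}}\le\sum_{\zeta}\theta_\zeta\langle\Psi_\zeta x,\Psi_\zeta x\rangle_{\mathcal{A}}\theta_\zeta^{\ast}\le B\sum_{\zeta}\langle\Psi_\zeta x,\Psi_\zeta x\rangle_{\mathcal{A}}$ is not automatic for noncommutative $\mathcal{A}$, but the paper performs the same manipulation here and in Theorems \ref{t7} and \ref{t11}, so your stages one and three are faithful to its conventions.
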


\begin{proof} For any $ x\in\mathcal{H} $, 
	\begin{align*}
		\left(\Vert \sum_{\zeta\in \Theta}  \langle \theta_{\zeta} \Psi_{\zeta}x,\theta_{\zeta}\Psi_{\zeta} x\rangle_{\mathcal{A}}\Vert\right)^{\frac{1}{2}} &\leq \left( \Vert \sum_{\zeta\in \Theta}  \langle \theta_{\zeta} \Psi_{\zeta}x,\theta_{\zeta}\Psi_{\zeta} x\rangle_{\mathcal{A}} \Vert - \Vert \sum_{\zeta\in \Theta}  \langle \delta_{\zeta} \Delta_{\zeta}x,\delta_{\zeta}\Delta_{\zeta} x\rangle_{\mathcal{A}}\Vert \right)^{\frac{1}{2}}\\
		&+\left(\Vert\sum_{\zeta\in \Theta}  \langle \delta_{\zeta} \Delta_{\zeta}x,\delta_{\zeta}\Delta_{\zeta} x\rangle_{\mathcal{A}}\Vert\right)^{\frac{1}{2}}\\
		&\leq \alpha_{1} \left( \Vert\sum_{\zeta\in \Theta}  \langle \theta_{\zeta} \Psi_{\zeta}x,\theta_{\zeta}\Psi_{\zeta} x\rangle_{\mathcal{A}}\Vert\right)^{\frac{1}{2}}	+\alpha_{2}\left(\Vert\sum_{\zeta\in \Theta}  \langle \delta_{\zeta} \Delta_{\zeta}x,\delta_{\zeta}\Delta_{\zeta} x\rangle_{\mathcal{A}}\Vert\right)^{\frac{1}{2}}\\
		&+\left(\Vert\sum_{\zeta\in \Theta}  \langle \delta_{\zeta} \Delta_{\zeta}x,\delta_{\zeta}\Delta_{\zeta} x\rangle_{\mathcal{A}}\Vert\right)^{\frac{1}{2}}.
	\end{align*}
	Then, we obtain 
	\begin{equation*}
		\left(1-\alpha_{1}\right)	\left( \sum_{\zeta\in \Theta}  \langle \theta_{\zeta} \Psi_{\zeta}x,\theta_{\zeta}\Psi_{\zeta} x\rangle_{\mathcal{A}}\right)^{\frac{1}{2}}\leq \left(1+\alpha_{2}\right)\left(\sum_{\zeta\in \Theta}  \langle \delta_{\zeta} \Delta_{\zeta}x,\delta_{\zeta}\Delta_{\zeta} x\rangle_{\mathcal{A}}\right)^{\frac{1}{2}}.
	\end{equation*}
So, we have
\begin{equation*}
	CA\left(1-\alpha_{1}\right)^{2} \langle x, x\rangle_{\mathcal{A}}\leq B \left(1+\alpha_{2}\right)^{2}\left(\sum_{\zeta\in \Theta}  \langle  \Delta_{\zeta}x,\Delta_{\zeta} x\rangle_{\mathcal{A}}\right).
\end{equation*} 
This implies that $  \sum_{\zeta\in \Theta}  \langle  \Delta_{\zeta}x,\Delta_{\zeta} x\rangle_{\mathcal{A}} \geq \dfrac{CA\left(1-\alpha_{1}\right)^{2}}{ B \left(1+\alpha_{2}\right)^{2}}\langle x, x\rangle_{\mathcal{A}} $, $ \forall x\in\mathcal{H} $.

On the other hand, for any $ x\in\mathcal{H} $ we get 
	\begin{equation*}
	\left(1-\alpha_{2}\right)	\left( \sum_{\zeta\in \Theta}  \langle \delta_{\zeta}\Delta_{\zeta}x,\delta_{\zeta}\Delta_{\zeta} x\rangle_{\mathcal{A}}\right)^{\frac{1}{2}}\leq \left(1+\alpha_{1}\right)\left(\sum_{\zeta\in \Theta}  \langle \theta_{\zeta} \Psi_{\zeta}x,\ \theta_{\zeta} \Psi_{\zeta}x\rangle_{\mathcal{A}}\right)^{\frac{1}{2}}.
\end{equation*}
So, we obtain 
\begin{equation*}
	A\left(1-\alpha_{2}\right)^{2}\left(\sum_{\zeta\in \Theta}  \langle  \Delta_{\zeta}x,\Delta_{\zeta} x\rangle_{\mathcal{A}}\right)\leq \langle x, x\rangle_{\mathcal{A}} DB \left(1+\alpha_{2}\right)^{2}.
\end{equation*}
Therefore,   $ \sum_{\zeta\in \Theta}  \langle  \Delta_{\zeta}x,\Delta_{\zeta} x\rangle_{\mathcal{A}}\leq \dfrac{BD \left(1+\alpha_{2}\right)^{2}}{A\left(1-\alpha_{1}\right)^{2} } \langle x, x\rangle_{\mathcal{A}}$, $ \forall x\in\mathcal{H} $.

\end{proof}

\begin{theorem}Let $ \Psi= \{\Psi_{\zeta}\} _{\zeta\in \Theta}$
	be a $ g- $frame for $ \mathcal{H} $ with bounds $ C $, $ D $ and $  \Delta= \{\Delta_{\zeta}\}_{\zeta\in \Theta} $. Let    $ \{\theta_{\zeta}\}_{\zeta\in \Theta} $ and $ \{\delta_{\zeta}\}_{\zeta\in \Theta} $ are two sequences from the algebra $\mathcal{A}$ such that $ 0< A< \theta_{\zeta}^{2} ,  \delta_{\zeta}^{2}<B< \infty$ and $ 0< \alpha_{1},\alpha_{2}<1 $ such that for any $ x\in \mathcal{H} $,
	\begin{equation*}
\sum_{\zeta\in \Theta}  \langle (\theta_{\zeta} \Psi_{\zeta}-\delta_{\zeta} \Delta_{\zeta})x,(\theta_{\zeta}\Psi_{\zeta}-\delta_{\zeta} \Delta_{\zeta}) x\rangle_{\mathcal{A}}\leq \alpha_{1} (  \sum_{\zeta\in \Theta}  \langle \theta_{\zeta} \Psi_{\zeta}x,\theta_{\zeta}\Psi_{\zeta} x\rangle_{\mathcal{A}})	+\alpha_{2}( \sum_{\zeta\in \Theta}  \langle \delta_{\zeta} \Delta_{\zeta}x,\delta_{\zeta}\Delta_{\zeta} x\rangle_{\mathcal{A}})
	\end{equation*}
	
	then  $\{ \Delta_{\zeta}\}_{\zeta\in \Theta} $ is a $ g- $frame for $ \mathcal{H} $ with respect to $ \{	\mathcal{H}_{\zeta\in \Theta}\}_{\zeta\in \Theta} $.
	
	\end{theorem}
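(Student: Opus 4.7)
The plan is to follow the template of the preceding proposition: pass from the operator-valued hypothesis to norm estimates in the direct-sum module $\bigoplus_{\zeta\in\Theta}\mathcal{H}_\zeta$, apply the triangle inequality there, and then convert to bounds on $\sum_\zeta\langle\Delta_\zeta x,\Delta_\zeta x\rangle_{\mathcal{A}}$ using the size conditions on $\theta_\zeta,\delta_\zeta$ together with the $g$-frame bounds of $\Psi$. Set
\[P=\sum_\zeta\langle\theta_\zeta\Psi_\zeta x,\theta_\zeta\Psi_\zeta x\rangle_{\mathcal{A}},\quad Q=\sum_\zeta\langle\delta_\zeta\Delta_\zeta x,\delta_\zeta\Delta_\zeta x\rangle_{\mathcal{A}},\quad R=\sum_\zeta\langle(\theta_\zeta\Psi_\zeta-\delta_\zeta\Delta_\zeta)x,(\theta_\zeta\Psi_\zeta-\delta_\zeta\Delta_\zeta)x\rangle_{\mathcal{A}};\]
these are positive elements of $\mathcal{A}$. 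Applying the $C^{*}$-norm to the hypothesis $R\leq\alpha_1 P+\alpha_2 Q$ and using $\sqrt{a+b}\leq\sqrt{a}+\sqrt{b}$ yields $\|R\|^{1/2}\leq\sqrt{\alpha_1}\,\|P\|^{1/2}+\sqrt{\alpha_2}\,\|Q\|^{1/2}$.

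Next I would view $\{\theta_\zeta\Psi_\zeta x\}_\zeta$ and $\{\delta_\zeta\Delta_\zeta x\}_\zeta$ as vectors in $\bigoplus_\zeta\mathcal{H}_\zeta$, whose norm squared at $\{y_\zeta\}$ is $\|\sum_\zeta\langle y_\zeta,y_\zeta\rangle_{\mathcal{A}}\|$. The triangle inequality there gives $\|P\|^{1/2}\leq\|R\|^{1/2}+\|Q\|^{1/2}$ and symmetrically $\|Q\|^{1/2}\leq\|R\|^{1/2}+\|P\|^{1/2}$. Substituting the previous bound and using $\alpha_1,\alpha_2\in(0,1)$, I obtain the sandwich
\[(1-\sqrt{\alpha_1})\|P\|^{1/2}\leq(1+\sqrt{\alpha_2})\|Q\|^{1/2}, \qquad (1-\sqrt{\alpha_2})\|Q\|^{1/2}\leq(1+\sqrt{\alpha_1})\|P\|^{1/2}.\]
The size conditions $A<\theta_\zeta^2,\delta_\zeta^2<B$ (interpreted as in the earlier theorems, so that these algebra elements are self-adjoint/central and hence $A\langle u,u\rangle_{\mathcal{A}}\leq\langle\theta_\zeta u,\theta_\zeta u\rangle_{\mathcal{A}}\leq B\langle u,u\rangle_{\mathcal{A}}$ in each $\mathcal{H}_\zeta$) let me compare $P,Q$ with the raw analysis sums: $A\|\sum_\zeta\langle\Psi_\zeta x,\Psi_\zeta x\rangle_{\mathcal{A}}\|\leq\|P\|\leq B\|\sum_\zeta\langle\Psi_\zeta x,\Psi_\zeta x\rangle_{\mathcal{A}}\|$, and analogously for $Q$ in terms of $\sum_\zeta\langle\Delta_\zeta x,\Delta_\zeta x\rangle_{\mathcal{A}}$. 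Combining with $C\|x\|^2\leq\|\sum_\zeta\langle\Psi_\zeta x,\Psi_\zeta x\rangle_{\mathcal{A}}\|\leq D\|x\|^2$ gives $AC\|x\|^2\leq\|P\|\leq BD\|x\|^2$, and feeding these into the sandwich produces
\[\frac{AC(1-\sqrt{\alpha_1})^2}{B(1+\sqrt{\alpha_2})^2}\,\|x\|^2\leq\Big\|\sum_\zeta\langle\Delta_\zeta x,\Delta_\zeta x\rangle_{\mathcal{A}}\Big\|\leq\frac{BD(1+\sqrt{\alpha_1})^2}{A(1-\sqrt{\alpha_2})^2}\,\|x\|^2,\]
which, exactly as in the preceding proposition, is read as the $g$-frame condition on $\{\Delta_\zeta\}_{\zeta\in\Theta}$.

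The main obstacle I expect is not any individual inequality but the bookkeeping between norm-level and $\mathcal{A}$-valued estimates: the hypothesis is operator-valued while the triangle-inequality step really lives at the norm level, and the conditions on $\theta_\zeta^2,\delta_\zeta^2$ translate into useful inner-product bounds only once one accepts, as the preceding results of the paper tacitly do, that these algebra elements are self-adjoint and commute with the relevant positive operators. Once those conventions are in force the argument closes without further difficulty.
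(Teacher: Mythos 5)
Your proposal is correct and follows essentially the same route as the paper's own proof: take norms of the operator-valued hypothesis, apply the triangle inequality for the square roots of $\Vert P\Vert,\Vert Q\Vert,\Vert R\Vert$, derive the two-sided sandwich between $\Vert P\Vert^{1/2}$ and $\Vert Q\Vert^{1/2}$, and then transfer the frame bounds of $\Psi$ to $\Delta$ via the size conditions on $\theta_\zeta,\delta_\zeta$. In fact your constants $(1+\sqrt{\alpha_2})$ and $\frac{AC(1-\sqrt{\alpha_1})^2}{B(1+\sqrt{\alpha_2})^2}$ are the correct ones, whereas the paper's write-up contains arithmetic slips (it replaces $(1+\sqrt{\alpha_2})\Vert Q\Vert^{1/2}$ by $2(\alpha_2\Vert Q\Vert)^{1/2}$ and then writes $(1-2\sqrt{\alpha_2})$ with an inverted final fraction); both versions share the same tacit conventions about the centrality of $\theta_\zeta,\delta_\zeta$ and the passage from norm-level bounds back to the $\mathcal{A}$-valued frame inequality.
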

\begin{proof}For $x \in \mathcal{H} $, we have 
	\begin{align*}
		( \sum_{\zeta\in \Theta}  \langle \theta_{\zeta} \Psi_{\zeta}x,\theta_{\zeta}\Psi_{\zeta} x\rangle_{\mathcal{A}})^{\frac{1}{2}}&\leq	\Vert  \sum_{\zeta\in \Theta}  \langle (\theta_{\zeta} \Psi_{\zeta}-\delta_{\zeta} \Delta_{\zeta})x,(\theta_{\zeta}\Psi_{\zeta}-\delta_{\zeta} \Delta_{\zeta}) x\rangle_{\mathcal{A}}\Vert^{\frac{1}{2}} + \Vert\sum_{\zeta\in \Theta}  \langle \delta_{\zeta} \Delta_{\zeta}x,\delta_{\zeta}\Delta_{\zeta} x\rangle_{\mathcal{A}}\Vert^{\frac{1}{2}}\\
		&\leq( \alpha_{1} \Vert \sum_{\zeta\in \Theta}  \langle \theta_{\zeta} \Psi_{\zeta}x,\theta_{\zeta}\Psi_{\zeta} x\rangle_{\mathcal{A}}\Vert	+\alpha_{2}\Vert\sum_{\zeta\in \Theta}  \langle \delta_{\zeta} \Delta_{\zeta}x,\delta_{\zeta}\Delta_{\zeta} x\rangle_{\mathcal{A}}\Vert)^{\frac{1}{2}}\\
		&+( \Vert\sum_{\zeta\in \Theta}  \langle \delta_{\zeta} \Delta_{\zeta}x,\delta_{\zeta}\Delta_{\zeta} x\rangle_{\mathcal{A}}\Vert)^{\frac{1}{2}}\\
		&\leq (\alpha_{1}\Vert \sum_{\zeta\in \Theta}  \langle \theta_{\zeta} \Psi_{\zeta}x,\theta_{\zeta}\Psi_{\zeta} x\rangle_{\mathcal{A}}\Vert)^{\frac{1}{2}}+ 2(\alpha_{2}\Vert\sum_{\zeta\in \Theta}  \langle \delta_{\zeta} \Delta_{\zeta}x,\delta_{\zeta}\Delta_{\zeta} x\rangle_{\mathcal{A}}\Vert)^{\frac{1}{2}}.
	\end{align*} 
	Then, for any $ x\in\mathcal{H} $ we get
	\begin{equation*}
		(1-\sqrt{\alpha_{1}})(	\Vert \sum_{\zeta\in \Theta}  \langle \theta_{\zeta} \Psi_{\zeta}x,\theta_{\zeta}\Psi_{\zeta} x\rangle_{\mathcal{A}}\Vert)^{\frac{1}{2}}\leq (1-2\sqrt{\alpha_{2}})\Vert\sum_{\zeta\in \Theta}  \langle \delta_{\zeta} \Delta_{\zeta}x,\delta_{\zeta}\Delta_{\zeta} x\rangle_{\mathcal{A}}\Vert)^{\frac{1}{2}}.
	\end{equation*}
We obtain,

	  $\Vert \sum_{\zeta\in \Theta}  \langle  \Delta_{\zeta}x,\Delta_{\zeta} x\rangle_{\mathcal{A}}\Vert\geq \dfrac{CA (1+2\sqrt{\alpha_{2}})^{2}}{B(1-\sqrt{\alpha_{1}})^{2} } \langle x, x\rangle_{\mathcal{A}}$, $ \forall x\in\mathcal{H} $.
	
Similarly, for any $ x\in\mathcal{H} $, 

  $\Vert \sum_{\zeta\in \Theta}  \langle  \Delta_{\zeta}x,\Delta_{\zeta} x\rangle_{\mathcal{A}}\Vert\geq \dfrac{DB (1+2\sqrt{\alpha_{2}})^{2}}{C(1-\sqrt{\alpha_{1}})^{2} } \langle x, x\rangle_{\mathcal{A}}$, $ \forall x\in\mathcal{H} $.
\end{proof}

\begin{theorem} \label{t12}Let $ \Psi= \{\Psi_{\zeta}\} _{\zeta\in \Theta}$
be a $ g- $frame for $ \mathcal{H} $ with bounds $ C $, $ D $ $ (D>1) $ and assume that $  \Delta= \{\Delta_{\zeta}\}_{\zeta\in \Theta} $ is a family so that for any $ J\subset \Theta $ with $ \vert J\vert< \infty  $, we have 
\begin{equation*}
	\Vert \sum_{\zeta\in J}( \Psi_{\zeta}^{*}\Psi_{\zeta}-\Delta_{\zeta})x \Vert \leq \dfrac{C}{D}\Vert x\Vert, \;\forall x\in\mathcal{H}. 
\end{equation*}
	Then  $\{ \Delta_{\zeta}\}_{\zeta\in \Theta} $ is a $ g- $frame for $ \mathcal{H} $ with bounds $ \dfrac{1}{C}(\dfrac{D+1}{D} )$, $ \dfrac{C+D^{2}}{D} $.
\end{theorem}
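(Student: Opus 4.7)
The plan is to read the hypothesis as a uniform pointwise bound on the difference between the (partial) $g$-frame operator $S_{\Psi}=\sum_{\zeta\in\Theta}\Psi_{\zeta}^{\ast}\Psi_{\zeta}$ and the corresponding partial sums of $\{\Delta_{\zeta}\}$, and then to transfer the $g$-frame inequalities from $\{\Psi_{\zeta}\}$ to $\{\Delta_{\zeta}\}$ by a direct comparison.

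First I would establish convergence of $\sum_{\zeta\in\Theta}\Delta_{\zeta}x$ in $\mathcal{H}$ for every $x$. Since $\{\Psi_{\zeta}\}$ is a $g$-Bessel sequence with bound $D$, the net of partial sums $\bigl(\sum_{\zeta\in J}\Psi_{\zeta}^{\ast}\Psi_{\zeta}x\bigr)_{J}$ is Cauchy in $\mathcal{H}$. Applying the assumed estimate to symmetric differences $J\triangle J'$ and using the triangle inequality, the net $\bigl(\sum_{\zeta\in J}\Delta_{\zeta}x\bigr)_{J}$ is Cauchy as well. This defines an adjointable operator $S_{\Delta}\in\operatorname{End}_{\mathcal{A}}^{\ast}(\mathcal{H})$ by $S_{\Delta}x=\sum_{\zeta\in\Theta}\Delta_{\zeta}x$, and passing to the limit in the hypothesis yields the global bound $\|(S_{\Psi}-S_{\Delta})x\|\leq\frac{C}{D}\|x\|$ for every $x\in\mathcal{H}$.

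From this I would extract the upper frame bound by combining the triangle inequality with $\|S_{\Psi}x\|\leq D\|x\|$, obtaining $\|S_{\Delta}x\|\leq\frac{C+D^{2}}{D}\|x\|$ and hence $\sum_{\zeta}\langle\Delta_{\zeta}x,\Delta_{\zeta}x\rangle_{\mathcal{A}}=\langle S_{\Delta}x,x\rangle_{\mathcal{A}}\leq\frac{C+D^{2}}{D}\langle x,x\rangle_{\mathcal{A}}$, matching the stated upper bound. For the lower bound, $S_{\Psi}\geq CI$ is invertible with $\|S_{\Psi}^{-1}\|\leq 1/C$, so $\|S_{\Psi}x\|\geq C\|x\|$; the reverse triangle inequality then gives $\|S_{\Delta}x\|\geq C\|x\|-\frac{C}{D}\|x\|=\frac{C(D-1)}{D}\|x\|$, which is strictly positive since $D>1$. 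Invoking Proposition~\ref{p1} then confirms that $\{\Delta_{\zeta}\}$ is a $g$-frame.

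The main obstacle is reconciling the natural perturbation estimate $\frac{C(D-1)}{D}$ for the lower bound with the precise value $\frac{1}{C}\cdot\frac{D+1}{D}$ printed in the theorem statement; the upper bound $\frac{C+D^{2}}{D}$ drops out immediately, but the printed lower bound appears to be either a typographical artefact or to reflect a different grouping of factors. I would carefully audit the estimate, perhaps applying the hypothesis in a mixed-norm form (bounding $\|x\|$ in terms of $\|S_{\Psi}^{1/2}x\|$ via $S_{\Psi}\geq CI$, then invoking the perturbation inequality) to see whether the exact printed constant can be recovered; in any case the structural argument above produces a valid positive lower frame bound for $\{\Delta_{\zeta}\}$.
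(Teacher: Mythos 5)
Your proposal reaches the right conclusion and handles the upper bound exactly as the paper does (triangle inequality against $\Vert\sum_{\zeta\in J}\Psi_\zeta^{*}\Psi_\zeta x\Vert\leq D\Vert x\Vert$, giving $\frac{C}{D}+D=\frac{C+D^{2}}{D}$), but your lower bound goes by a genuinely different and more elementary route. The paper writes $\mathcal{K}x=\sum_\zeta\Delta_\zeta^{*}\Delta_\zeta x$ and proves invertibility of $\mathcal{K}$ via a Neumann-series argument: $\Vert x-\mathcal{K}S^{-1}x\Vert\leq\frac{C}{D}\Vert S^{-1}x\Vert\leq\frac{1}{D}\Vert x\Vert<\Vert x\Vert$ since $D>1$, whence $\mathcal{K}S^{-1}$ and then $\mathcal{K}$ are invertible, and a bound on $\Vert\mathcal{K}^{-1}\Vert$ is extracted. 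Your reverse triangle inequality $\Vert\mathcal{K}x\Vert\geq\Vert S_{\Psi}x\Vert-\frac{C}{D}\Vert x\Vert\geq\frac{C(D-1)}{D}\Vert x\Vert$ avoids invertibility of $S_{\Psi}$ and the Neumann series altogether, and in fact produces the same effective constant the paper's computation should yield ($\Vert(\mathcal{K}S^{-1})^{-1}\Vert\leq\frac{D}{D-1}$, so $\Vert\mathcal{K}^{-1}\Vert\leq\frac{1}{C}\cdot\frac{D}{D-1}$ and the lower frame bound is $\frac{C(D-1)}{D}$). You are also right to distrust the printed constant $\frac{1}{C}\bigl(\frac{D+1}{D}\bigr)$: it is a (miscomputed) bound on $\Vert\mathcal{K}^{-1}\Vert$, not a lower frame bound, so your audit is warranted rather than a defect of your argument.

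Two genuine gaps remain, one of which the paper shares. First, passing from $\Vert\mathcal{K}x\Vert\geq m\Vert x\Vert$ to the operator inequality $\mathcal{K}\geq\alpha I$ demanded by Proposition~\ref{p1} is not automatic in a Hilbert $C^{*}$-module: you must use that $\mathcal{K}$ is positive and self-adjoint, apply Lemma~\ref{L22} to get $\langle\mathcal{K}x,\mathcal{K}x\rangle_{\mathcal{A}}\geq m'\langle x,x\rangle_{\mathcal{A}}$, i.e.\ $\mathcal{K}^{2}\geq m'I$, and then invoke operator monotonicity of the square root (equivalently, argue that a positive invertible element of the $C^{*}$-algebra $\operatorname{End}_{\mathcal{A}}^{*}(\mathcal{H})$ has spectrum bounded away from $0$). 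Saying only ``invoke Proposition~\ref{p1}'' skips this. Second, your convergence argument does not work as stated: the hypothesis gives a \emph{uniform} bound $\frac{C}{D}\Vert x\Vert$ on all finite partial sums of the difference, which bounds symmetric differences but does not make them small, so the net $\bigl(\sum_{\zeta\in J}\Delta_\zeta^{*}\Delta_\zeta x\bigr)_{J}$ is not shown to be Cauchy by that reasoning. The paper's own step (``hence the series converges'' from mere boundedness of partial sums) has the same defect, so this is an inherited weakness of the theorem's formulation rather than something your alternative route introduces, but you should not present the Cauchy claim as following from the symmetric-difference estimate.
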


\begin{proof}
	
	For any $ x\in\mathcal{H} $ we have
	
	\begin{align*}
		\Vert \sum_{\zeta\in J} \Psi_{\zeta}^{*}\Psi_{\zeta}x\Vert& = \sup_{y\in\mathcal{H}, \Vert y\Vert =1}\Vert \langle \sum_{\zeta\in J} \Psi_{\zeta}^{*}\Psi_{\zeta}x, y\rangle_{\mathcal{A}} \Vert\\
		&= \sup_{y\in\mathcal{H}, \Vert y\Vert =1}\Vert  \sum_{\zeta\in J}\langle \Psi_{\zeta}x, \Psi_{\zeta}y\rangle_{\mathcal{A}} \Vert\\
		&\leq \sup_{y\in\mathcal{H}, \Vert y\Vert =1}\Vert  \sum_{\zeta\in J}\langle \Psi_{\zeta}x, \Psi_{\zeta}x \rangle_{\mathcal{A}} \Vert^{\frac{1}{2}} \Vert  \sum_{\zeta\in J}\langle \Psi_{\zeta}y, \Psi_{\zeta}y\rangle_{\mathcal{A}} \Vert^{\frac{1}{2}}\\
		&\leq \sqrt{D} \Vert  \sum_{\zeta\in J}\langle \Psi_{\zeta}x, \Psi_{\zeta}x\rangle_{\mathcal{A}} \Vert^{\frac{1}{2}}\\
		&\leq D \Vert x\Vert .
	\end{align*}
So we obtain 

\begin{equation*}
		\Vert \sum_{\zeta\in J} \Delta_{\zeta}^{*}\Delta_{\zeta}x\Vert\leq (\dfrac{C}{D}+D) \Vert x\Vert. 
\end{equation*}
Hence, the series $ \sum_{\zeta\in \Theta}\Delta_{\zeta}^{*}\Delta_{\zeta}x $ converge. We define 
\begin{equation*}
	\mathcal{K}(x)= \sum_{\zeta\in \Theta} \Delta_{\zeta}^{*}\Delta_{\zeta}x ,\;\forall x\in\mathcal{H}.
	\end{equation*}
This operator is well defined and bounded on $ \mathcal{H} $ and $ \Vert \mathcal{K}  \Vert\leq   \dfrac{C}{D}+D$. 

Also for $ x\in\mathcal{H} $
\begin{align*}
		\Vert \sum_{\zeta\in \Theta} \Delta_{\zeta}x\Vert^{2} &= \Vert \sum_{\zeta\in \Theta}\langle \Delta_{\zeta}x,\Delta_{\zeta}x\rangle_{\mathcal{A}}\Vert\\
		&\leq \Vert \mathcal{K} \Vert \Vert x\Vert ^{2}.
\end{align*}
That is $ \{\Delta_{\zeta}\}_{\zeta\in \Theta} $ is a $ g- $Bessel sequence for $ \mathcal{H} $. If $ S $ is the $ g- $frame operator of $ \{\Psi_{\zeta}\}_{\zeta\in \Theta} $, we get for all $ x\in\mathcal{H} $ $$ \Vert Sx-\mathcal{K} x\Vert \leq \dfrac{C}{D} \Vert x\Vert.$$ 
Then, $ \Vert x- \mathcal{K} S^{-1}x \Vert \leq \dfrac{1}{D}\Vert x\Vert \Rightarrow  \Vert I- \mathcal{K} S^{-1} \Vert \leq \dfrac{1}{D}\leq 1$.
Hence, $ \mathcal{K} S^{-1} $ is invertible and so is $ \mathcal{K}  $. Therefore, $ \{\Delta_{\zeta}\}_{\zeta} $ is a $ g- $frame for $ \mathcal{H} $. Also, we have for $ x\in\mathcal{H} $
 \begin{equation*}
 \Vert	\mathcal{K} ^{-1}\Vert \leq \Vert S^{-1}\Vert \Vert S\mathcal{K} ^{-1}\Vert \leq \dfrac{1}{C}(\dfrac{D+1}{D}).
 \end{equation*}
Therefore, $ \dfrac{1}{C}(\dfrac{D+1}{D}) $, $ \dfrac{C}{A}+D $ are bounds for $ \{\Delta_{\zeta}\}_{\zeta} $.
	\end{proof}

\begin{corollary}Let $ \Psi= \{\Psi_{\zeta}\} _{\zeta\in \Theta}$
	be a $ g- $frame for $ \mathcal{H} $ with bounds $ C $, $ D $ $ (0<\alpha<D) $. Suppose $ \Delta= \{\Delta_{\zeta}\}_{\zeta\in \Theta} $ such that for $ x\in\mathcal{H}, $ 
$$ \Vert \sum_{\zeta\in \Theta}\left( \Psi_{\zeta}^{*}\Psi_{\zeta}-\Delta_{\zeta}^{*}\Delta_{\zeta}\right) x  \Vert\leq \alpha \Vert x \Vert. $$ Then  $\{ \Delta_{\zeta}\}_{\zeta\in \Theta} $ is a $ g- $frame for $ \mathcal{H} $ with respect to $ \{	\mathcal{H}_{\zeta}\}_{\zeta\in \Theta} $.
	\end{corollary}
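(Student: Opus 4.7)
The plan is to replicate the argument of Theorem \ref{t12} in the infinite-sum setting. Set $S:=\sum_{\zeta\in\Theta}\Psi_\zeta^*\Psi_\zeta$, the $g$-frame operator of $\Psi$, so that $CI\le S\le DI$ and hence $S$ is positive and invertible with $\Vert S^{-1}\Vert\le 1/C$. Define
$$\mathcal{K}x:=\sum_{\zeta\in\Theta}\Delta_\zeta^*\Delta_\zeta x,$$
so that the hypothesis reads $\Vert Sx-\mathcal{K}x\Vert\le\alpha\Vert x\Vert$ for every $x\in\mathcal{H}$.

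First I would verify that $\mathcal{K}$ is a well-defined bounded self-adjoint operator on $\mathcal{H}$: convergence of $\sum_\zeta\Delta_\zeta^*\Delta_\zeta x$ follows from convergence of $\sum_\zeta\Psi_\zeta^*\Psi_\zeta x$ combined with the given bound, and the triangle inequality yields $\Vert\mathcal{K}\Vert\le\Vert S\Vert+\alpha\le D+\alpha$. Self-adjointness and positivity of $\mathcal{K}$ are inherited termwise from each $\Delta_\zeta^*\Delta_\zeta$. This immediately gives the upper $g$-frame inequality for $\{\Delta_\zeta\}$, since $\sum_{\zeta\in\Theta}\langle\Delta_\zeta x,\Delta_\zeta x\rangle_{\mathcal{A}}=\langle\mathcal{K}x,x\rangle_{\mathcal{A}}$ has norm at most $(D+\alpha)\Vert x\Vert^2$; in other words, $\{\Delta_\zeta\}$ is a $g$-Bessel sequence with $\mathcal{K}$ as its $g$-frame operator.

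For the lower bound I would reuse the Neumann-series trick appearing in the proof of Theorem \ref{t12}. Writing $I-\mathcal{K}S^{-1}=(S-\mathcal{K})S^{-1}$ and combining the hypothesis with $\Vert S^{-1}\Vert\le 1/C$ gives
$$\Vert I-\mathcal{K}S^{-1}\Vert\le \alpha\cdot\Vert S^{-1}\Vert\le \alpha/C <1$$
under the smallness condition on $\alpha$. Hence $\mathcal{K}S^{-1}$ is invertible by the Neumann series, so $\mathcal{K}$ is invertible. Being positive and boundedly invertible, $\mathcal{K}\ge\beta I$ for some $\beta>0$, and Corollary \ref{c32} (equivalently Proposition \ref{p1}) concludes that $\{\Delta_\zeta\}$ is a $g$-frame for $\mathcal{H}$ with respect to $\{\mathcal{H}_\zeta\}_{\zeta\in\Theta}$.

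The only real obstacle is a bookkeeping check on the constant: to close the Neumann series one actually needs $\alpha$ to be strictly less than the lower frame bound $C$ (not merely less than $D$), so the corollary's hypothesis must be read in conjunction with the operator inequality $CI\le S\le DI$. All analytic content is inherited from Theorem \ref{t12}, so the remaining work is purely to transcribe the finite-$J$ argument to the full sum.
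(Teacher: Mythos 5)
Your proof follows essentially the same route as the paper's: the same operator $\mathcal{K}x=\sum_{\zeta\in\Theta}\Delta_\zeta^*\Delta_\zeta x$, the same estimate $\Vert I-\mathcal{K}S^{-1}\Vert\le\alpha\Vert S^{-1}\Vert\le\alpha/C$, and the same conclusion via invertibility of $\mathcal{K}$ (you are a bit more careful than the paper in passing from invertibility of the positive operator $\mathcal{K}$ to an explicit lower bound $\mathcal{K}\ge\beta I$ and then invoking Proposition \ref{p1}). Your closing remark is also correct and worth recording: the stated hypothesis $0<\alpha<D$ does not guarantee $\alpha/C<1$, and the paper's own proof tacitly uses $\alpha<C$, so the corollary should be read with that stronger assumption on $\alpha$.
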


\begin{proof}
	Similarly the proof of the  Theorem \ref{t12}, if we take
	
	  \begin{equation*}
	  	\mathcal{K} (x)= \sum_{\zeta\in \Theta} \Delta_{\zeta}^{*}\Delta_{\zeta}x ,\;\forall x\in\mathcal{H}.
	  \end{equation*}
  Then  $\{ \Delta_{\zeta}\}_{\zeta\in \Theta} $ is a $ g- $Bessel sequence for $ \mathcal{H} $, we have $$ \Vert Sx-\mathcal{K} x\Vert \leq \alpha \Vert x\Vert  .$$ Then 
  
  $$ \Vert x-\mathcal{K} S^{-1} x \Vert \leq \alpha \Vert S^{-1}\Vert  \Vert x\Vert \leq \alpha \dfrac{1}{C} \Vert x \Vert, \; \forall x\in\mathcal{H}.$$ 
  This implies that $$ \Vert I-\mathcal{K} S^{-1}\Vert <1. $$ So, $ \mathcal{K} S^{-1} $ is invertible and $ \mathcal{K}  $ is also invertible.
  
   Hence, $ g- $Bessel sequence $ \{\Delta_{\zeta}\}_{\zeta} $ is a $ g- $frame for $ \mathcal{H} $ with respect to $ \{\mathcal{H}\}_{\zeta\in \Theta} $.
  
\end{proof}

	\section{Examples in \texorpdfstring{$C^*$}{C*}-Algebraic Settings}\label{secExamples}

We now present two concrete examples showing how to realize g-frames and their sums in explicit $C^*$-algebraic modules. These examples illustrate that the sum of two g-frames can fail to remain a g-frame unless certain positivity or invertibility conditions are enforced.

\subsection{Example with matrix algebras}

\begin{example}\label{exMatrixAlgebra}
	Let $\mathcal{A} = M_2(\mathbb{C})$ be the $2\times 2$ complex matrices, which is a unital $C^*$-algebra with the usual operator norm and $^*$-operation. Consider the Hilbert $\mathcal{A}$-module $\mathcal{H} = \mathcal{A}^2$ (i.e., column 2-vectors over $\mathcal{A}$) with the standard $\mathcal{A}$-valued inner product
	\[
	\langle (a_1,a_2)^T,\;(b_1,b_2)^T\rangle_{\mathcal{A}}
	\;=\;
	a_1^* b_1 + a_2^* b_2,
	\]
	where addition and multiplication are in $M_2(\mathbb{C})$.
	
	Define two families $\{\Lambda_1,\Lambda_2\}$ and $\{\Theta_1,\Theta_2\}$ as follows. For $x\in\mathcal{H}$ (which is a pair $(x_1,x_2)^T$ with each $x_i\in M_2(\mathbb{C})$):
	\[
	\Lambda_1(x) = \bigl(x_1,\,0\bigr)^T, 
	\quad
	\Lambda_2(x) = \bigl(0,\,x_2\bigr)^T,
	\]
	\[
	\Theta_1(x) = \bigl(x_2,\,0\bigr)^T,
	\quad
	\Theta_2(x) = \bigl(0,\,x_1\bigr)^T.
	\]
	One checks that each $\{\Lambda_1,\Lambda_2\}$ and $\{\Theta_1,\Theta_2\}$ forms a g-frame for $\mathcal{H}$ (they essentially “capture” each component of $(x_1,x_2)^T$ in disjoint slots).
	
	Now consider the sum $\{\Lambda_1+\Theta_1,\;\Lambda_2+\Theta_2\}$. One quickly sees that
	\[
	(\Lambda_1+\Theta_1)(x) 
	\;=\;
	\bigl(x_1+x_2,\;0\bigr)^T,
	\quad
	(\Lambda_2+\Theta_2)(x)
	\;=\;\bigl(0,\;x_2+x_1\bigr)^T.
	\]
	\emph{Case 1:} If $x_1$ and $x_2$ are such that $x_1 = -x_2$, then $(\Lambda_1+\Theta_1)(x)=0$ and $(\Lambda_2+\Theta_2)(x)=0$, so the sum has no lower bound and is \emph{not} a g-frame in that arrangement.
	
	\emph{Case 2:} If we modify one family by a suitable invertible multiplier $L\in \mathrm{End}_\mathcal{A}^*(\mathcal{H})$ (for instance, letting $L$ act diagonally on the $\mathcal{A}^2$ coordinate), then positivity or invertibility conditions can ensure no such complete cancellation occurs, restoring a uniform lower bound.
	
	Hence, this example demonstrates how the sums of two g-frames \emph{may fail} to be a g-frame unless additional constraints (like invertibility or positivity) are imposed, exactly as our theorems suggest.
\end{example}

\subsection{Example with a continuous-function algebra}

\begin{example}\label{exContinuousFunctions}
	Let $\mathcal{A} = C([0,1])$ be the $C^*$-algebra of continuous complex-valued functions on $[0,1]$. We consider the standard \emph{free module} $\mathcal{H} = \mathcal{A}^m$ for some $m\in \mathbb{N}$. An element $x\in \mathcal{H}$ can be viewed as a vector $(f_1,\ldots,f_m)^T$ with each $f_j\in C([0,1])$. The $\mathcal{A}$-valued inner product is
	\[
	\langle x,y\rangle_{\mathcal{A}}(\alpha)
	\;=\;
	\sum_{j=1}^m \overline{f_j(\alpha)}\, g_j(\alpha),
	\]
	for $x=(f_1,\dots,f_m)$, $y=(g_1,\dots,g_m)$ in $\mathcal{H}$, where $(\langle x,y\rangle_{\mathcal{A}})(\alpha)$ is a continuous function of $\alpha\in [0,1]$.
	
	Define, for instance, $\Lambda_1(x) = (f_1, 0,\dots,0)$, $\Lambda_2(x) = (0,f_2,0,\dots,0)$, etc.\ so that $\{\Lambda_i\}_{i=1}^m$ collectively “read off” the coordinates. This forms a g-frame (essentially a standard basis in $\mathcal{A}^m$). Next, define $\Theta_1,\ldots,\Theta_m$ similarly but perhaps with a shift: $\Theta_j(x)=(0,\dots,0,f_j,0,\dots)$. Each family alone is a g-frame. However, if we sum them coordinate-wise, $\Lambda_j + \Theta_j$, there may be points $\alpha\in[0,1]$ at which $f_j(\alpha)$ and $g_j(\alpha)$ combine destructively (e.g.\ $f_j(\alpha) + g_j(\alpha)=0$), losing the lower bound in the entire module.
	
	On the other hand, if we multiply one family by an invertible element in $\mathrm{End}_\mathcal{A}^*(\mathcal{H})$ (for example, a bounded diagonal operator with no vanishing entries over $[0,1]$), then we can often ensure positivity of the new g-frame operator. This aligns with Theorem \ref{t3}, requiring that a certain operator be invertible in the Hilbert $C^*$-module sense.
\end{example}

These examples show concretely how sums of g-frames in a $C^*$-algebraic environment can fail or succeed depending on invertibility considerations.

\section{Conclusion and Remarks}

We have established several conditions under which the sum of two g-frames (or a g-frame and a g-Bessel sequence) in a Hilbert $C^*$-module remains a g-frame. Central to these conditions are the invertibility or positivity of the associated operators (for instance, $(I+L)^* S_\Lambda (I+L)$ or $M^* T_\Lambda + N^* T_\Theta$). Our explicit examples in matrix algebras and continuous function algebras illustrate how cancellation effects can destroy the uniform lower bound unless we impose these algebraic constraints. 

These results extend the known stability properties of g-frames and highlight the unifying role of invertible operators in controlling sums of g-frames in Hilbert $C^*$-modules.

\bibliographystyle{amsplain}

\end{document}